\newcolumntype{P}[1]{>{\centering\arraybackslash}p{#1}}
\newcolumntype{M}[1]{>{\centering\arraybackslash}m{#1}}
\definecolor{mygray}{rgb}{0.98,0.95,0.75}
\newtcolorbox{mybox}{
	arc=0pt,
	boxrule=0pt,
	colback=mygray,
	width=1\textwidth,   
	colupper=black,
	fontupper=\normalsize
}
\newcommand{\xdownarrow}[1]{%
	{\left\downarrow\vbox to #1{}\right.\kern-\nulldelimiterspace}
}
\newcommand{\xuparrow}[1]{%
	{\left\uparrow\vbox to #1{}\right.\kern-\nulldelimiterspace}
}
\newtheorem{theorem}{Theorem}[section]
\newtheorem{conjecture}[theorem]{Conjecture}
\newtheorem{corollary}[theorem]{Corollary}
\newtheorem{definition}[theorem]{Definition}
\newtheorem{example}[theorem]{Example}
\newtheorem{lemma}[theorem]{Lemma}
\newtheorem{proposition}[theorem]{Proposition}
\newtheorem{remark}[theorem]{Remark}
\newenvironment{proof}[1][Proof]{\noindent\textbf{#1.} }{\ \rule{0.5em}{0.5em}}
\title{\textbf{Zariski's multiplicity conjecture for quasihomogeneous hypersurfaces with non-isolated singularities\footnotetext{\textit{2020 Mathematics Subject Classification}. 14J17 (Primary), 32S05, 32S50 (Secondary).}}}
\author{ \ \ \ \\{Silva, O. N. $ \ \ $ and  $ \ \ $ Silva Jr, M. M. $ \ \ $}}
\date{}
\begin{document}
	
	\maketitle
	
	\begin{abstract}
		
		In this work, we consider a pair $(\textbf{X},0)$ and $(\textbf{Y},0)$ of hypersurfaces in $(\mathbb{C}^{n+1},0)$ parametrized by finitely determined, quasihomogeneous map germs $f$ and $g,$ respectively. Zariski asked whether the multiplicity is preserved under topological equivalence of hypersurface germs. We address this question within a wide class of $n$-dimensional quasihomogeneous varieties with non-isolated singularities in $\mathbb{C}^{n+1},$ where $2\le n\le 4.$ This class consists of varieties that arise as image of finitely determined, quasihomogeneous map germs. Using a quasihomogeneous normal form, we derive explicit formulas for the multiplicity in terms of the weights and the degrees of the map germ. Our results show that multiplicity, within this setting, is determined by the weighted data and is invariant under topological equivalence, thereby confirming Zariski's multiplicity conjecture and extending current knowledge beyond the isolated singularity case.
	\end{abstract}

    \textbf{Keywords:} Zariski's multiplicity conjecture, non-isolated singularity, quasihomogeneous hypersurface, good multiplicity.
	
	
	\section{Introduction}
	
	$ \ \ \ \ $ In his discourse upon leaving the presidency of the American Mathematical Society in 1971, Zariski proposed some questions in singularity theory (see \cite{Conjzari}). One of them is what is nowadays known as \textit{Zariski's multiplicity conjecture} whose statement we will explain in the sequel: consider a germ $F:(\mathbb{C}^n,0)\rightarrow (\mathbb{C},0)$ of a reduced analytic function at the origin with $F \not\equiv 0$. Let $(V(F),0)$ be the germ of the zero set of $F$ at the origin. Write

\begin{center}
$F = F_m + F_{m+1} + \cdots + F_k + \cdots$
\end{center}

\noindent where each $F_k$ is a homogeneous polynomial of degree $k$ and $F_m \not \equiv 0$. The integer $m$ is called the multiplicity of $V(F)$ at $0$ and it is denoted by $m(V(F),0)$ (or $m(V(F))$ for short).

We will establish a notation that will be used throughout the work. Let $F,G:(\mathbb{C}^n,0)\rightarrow (\mathbb{C},0)$ be reduced germs (at the origin) of holomorphic functions, $(\textbf{X},0):=(V(F),0)$, $(\textbf{Y},0):=(V(G),0)$ the corresponding germs of hypersurfaces in $(\mathbb{C}^n,0)$. We say that $F$ and $G$ are topologically equivalent if there is a germ of homeomorphism $\varphi:(\mathbb{C}^n,0)\rightarrow (\mathbb{C}^n,0)$ such that $\varphi(\textbf{X},0)=(\textbf{Y},0)$. 

\begin{mybox}
\textbf{Zariski's multiplicity conjecture:} Consider $F,G, \textbf{X}$ and $\textbf{Y}$ as above. If $F$ and $G$ are topologically equivalent, then the multiplicities $m(\textbf{X},0)$ and $m(\textbf{Y},0)$ are equal.
\end{mybox}

More than $50$ years later, Zariski's multiplicity conjecture above is still open. Several special cases of the conjecture have been proved.. We would like to cite some historical cases that will be important throughout this work. The first contribution to prove Zariski's multiplicity conjecture was given by Zariski in \cite{Zariskii}, where he proved the conjecture when $n=2$. In 1973, due to the works of A'Campo and Lê published in this same year, one can deduce a proof for the 
case in which $m(\textbf{X},0)=1$ combining the results \cite[Th. 3]{ACampo} with \cite[Prop.]{Le}. As an application, it is not hard to see that if $F$ and $G$ are topologically equivalent and $(\textbf{X},0)$ has an isolated singularity at the origin, then so does $(\textbf{Y},0)$. In 1980, Navarro Aznar showed the conjecture (see \cite{Navarro}) in the case where $n=3$ and $m(\textbf{X},0)=2$.

Among recent advances, it is worth highlighting the important result obtained by Fernández de Bobadilla and Pełka \cite{Pelka}, which proves that if a family $\textbf{X}_t$ of hypersurfaces with isolated singularities is $\mu$-constant, then $\textbf{X}_t$ is equimultiple. For an overview of the conjecture, the reader may also consult the work of Eyral \cite{Eyral}.

 There are proofs of other cases of Zariski's multiplicity conjecture under additional conditions. One could, for instance, add the hypothesis that $F$ and $G$ are both quasihomogeneous with an isolated critical point at the origin. In this direction, in 1988 Saeki (\cite{Saeki}) and independently Xu and Yau (\cite{Xu} and \cite{Yau}) presented a proof for the conjecture in the case where $n=3$ and $F$ and $G$ are both quasihomogeneous with an isolated critical point at the origin.
 
 Note that if $F$ has an isolated critical point at the origin then $(\textbf{X},0)$ has isolated singularity. If $F$ and $G$ are quasihomogeneous surfaces in $\mathbb{C}^3$ that are topologically equivalent and $(\textbf{X},0)$ has a non-isolated singularity at the origin, then we cannot apply the Saeki-Xu-Yau result to conclude that $(\textbf{X},0)$ and $(\textbf{Y},0)$ have the same multiplicity at the origin. One of the main goals of this work is precisely to address the case of non-isolated singularities not covered by the Saeki-Xu-Yau result. We present sufficient conditions in this setting to conclude that the multiplicities $m(\textbf{X},0)$ and $m(\textbf{Y},0)$ are the same.  More precisely, our first main result is the following one:
 
\begin{theorem}\label{mainresult} Let $(\textbf{X},0)$ and $(\textbf{Y},0)$ be germs of irreducible surfaces in $(\mathbb{C}^3,0)$ defined as the zero set of reduced quasihomogeneous map germs $F,G:(\mathbb{C}^3,0)\rightarrow (\mathbb{C},0)$, respectively. Suppose that $(\textbf{X},0)$ (respectively $(\textbf{Y},0)$) has a smooth normalization and outside the origin the only singularities of $(\textbf{X},0)$ (respectively $(\textbf{Y},0)$) are transverse double points. If $(\textbf{X},0)$ and $(\textbf{Y},0)$ are topologically equivalent, then $m(\textbf{X},0)=m(\textbf{Y},0)$.
\end{theorem}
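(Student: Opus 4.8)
The plan is to connect the multiplicity of each surface to invariants of the parametrizing map germ, and then to show those invariants are topological. The key observation is that the surfaces here are parametrized: since $(\textbf{X},0)$ has a smooth normalization and its only singularities away from the origin are transverse double points, $(\textbf{X},0)$ is the image of a finitely determined map germ $f:(\mathbb{C}^2,0)\to(\mathbb{C}^3,0)$, and similarly $(\textbf{Y},0)$ is the image of some $g$. Because $F$ and $G$ are quasihomogeneous, I expect $f$ and $g$ can be taken to be quasihomogeneous of corank $1$, which puts us exactly in the setting where the results on the double point curve $D(f)$ (alluded to in the abstract) apply. The first step, therefore, is to record this parametrization and fix the weights/degrees of $f$, so that the multiplicity $m(\textbf{X},0)$ becomes computable from the weights of $f$ (for a quasihomogeneous parametrized surface the multiplicity of the image is determined by the lowest-degree terms of the components of $f$).

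Second, I would express $m(\textbf{X},0)$ in terms of the degree data of $f$ and relate that data to the Milnor number of the double point curve in the source, or to the number of branches and their contact orders. For a corank-$1$ quasihomogeneous $f=(x,p(x,y),q(x,y))$, the multiplicity of the image and the numerical invariants of $D(f)$ are all determined by the same weighted-homogeneous data, so I would assemble an explicit formula $m(\textbf{X},0)=\Phi(\text{weights})$ and, crucially, re-express $\Phi$ through quantities that are known to be topological invariants of the pair $(\mathbb{C}^3, \textbf{X})$, the natural candidates being the reduced Euler characteristic / Milnor fibre data or the $\mu$-type invariants of the transverse slice and of $D(f)$.

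Third, I would invoke the existing low-dimensional cases of Conjecture $1$ to transport topological equivalence between the ambient surfaces down to the source. The transverse double point locus of $\textbf{X}$ is a curve, and its generic transverse slice is a plane curve singularity; by Zariski's own $n=2$ theorem the multiplicity of a plane curve is a topological invariant, so a topological equivalence $\varphi:(\mathbb{C}^3,\textbf{X})\to(\mathbb{C}^3,\textbf{Y})$ should carry the singular locus of $\textbf{X}$ to that of $\textbf{Y}$ and the transverse slices to transverse slices. This is the mechanism by which the weighted-homogeneous data entering $\Phi$ is forced to coincide for $\textbf{X}$ and $\textbf{Y}$. I would then conclude $m(\textbf{X},0)=\Phi(\text{weights of }f)=\Phi(\text{weights of }g)=m(\textbf{Y},0)$.

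The hard part will be the third step: showing that topological equivalence of the \emph{images} actually forces equality of the numerical data of the parametrizations. A homeomorphism of the ambient pairs need not lift to a homeomorphism of the normalizations or respect the parametrization, so I cannot assume the double point curves $D(f)$ and $D(g)$ are directly matched. The delicate point is to prove that the invariant I isolate in step two is genuinely read off from the embedded topological type of $(\textbf{X},0)\subset(\mathbb{C}^3,0)$ — for instance by realizing it as the multiplicity of the one-dimensional singular set together with the local embedded data along it, both of which are preserved by $\varphi$. If the chosen invariant is not manifestly topological, the argument collapses; so the technical core is to match $m(\textbf{X},0)$ with a quantity whose topological invariance is already established in the literature (Zariski for curves, A'Campo--L\^e, Navarro Aznar) and then to reduce the surface problem to those settled cases.
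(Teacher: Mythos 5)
Your first two steps track the paper: the normalization $f$ is a quasihomogeneous finitely determined germ (by Mather--Gaffney), and the paper's Lemma \ref{lema aux5} gives exactly the formula you anticipate, $m(\textbf{X},0)=d_1d_2/(ab)$ (note this needs the degrees as well as the weights, and no corank~$1$ hypothesis is available or needed). The genuine gap is your third step, and you have in fact talked yourself out of the one tool that closes it. You write that a homeomorphism of the ambient pairs ``need not lift to a homeomorphism of the normalizations,'' so you cannot match $D(f)$ with $D(g)$ --- but such a lift \emph{does} exist: this is the equisingularity-at-the-normalisation theorem of Fern\'andez de Bobadilla and Pe Pereira (\cite{bobadilla}), and it is the decisive external input of the paper's proof. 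It produces $\tilde h$ with $h\circ f=g\circ\tilde h$ and $\tilde h(D(f))=D(g)$, so $D(f)$ and $D(g)$ are homeomorphic as embedded plane curves; since both are quasihomogeneous with isolated singularity, Yoshinaga--Suzuki gives equality of the weights $(a,b)$. The degrees are then pinned down by a separate argument you do not have: the topological invariance of the cross-cap and triple-point numbers $C$ and $T$ for finitely determined germs $(\mathbb{C}^2,0)\to(\mathbb{C}^3,0)$ (\cite{edson3}), combined with the invariance of the weighted degree of the equation of $D(f)$ (read off from intersection multiplicities of its branches) and Mond's formulas, yields a system of symmetric equations forcing $d_i=\tilde d_i$.

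The substitute mechanism you propose for step three does not work. Reading $m(\textbf{X},0)$ from the transverse slice of the double point locus is hopeless: at a generic point of the singular curve the transverse slice is always two smooth branches meeting transversally, independently of $m(\textbf{X},0)$. And reducing to Zariski's $n=2$ theorem via a generic plane section through the origin fails for the standard reason that a homeomorphism of $(\mathbb{C}^3,\textbf{X})$ need not carry a generic plane section of $\textbf{X}$ to anything homeomorphic to a generic plane section of $\textbf{Y}$; if that reduction were available, Zariski's conjecture would follow in all dimensions by slicing. So as written the proposal identifies the right target quantity but leaves the topological-invariance step, which is the entire content of the theorem, unproved.
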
  
  
One of the simplest examples of surfaces in $(\mathbb{C}^3,0)$ in this setting is the singularity $C_5$ of Mond's list \rm(\cite[p. 378]{[8]}). It is a surface parametrized by the map $f(x,y)=(x,y^2,xy^3-x^5y)$ (see Figure \ref{figura11}). This surface is given as the zero set of $F(X,Y,Z)=Z^2-X^2Y^3+2X^6Y^2-X^{10}Y$, which is quasihomogeneous of multiplicity $2$. 

\begin{figure}[h]
\centering
\includegraphics[scale=0.21]{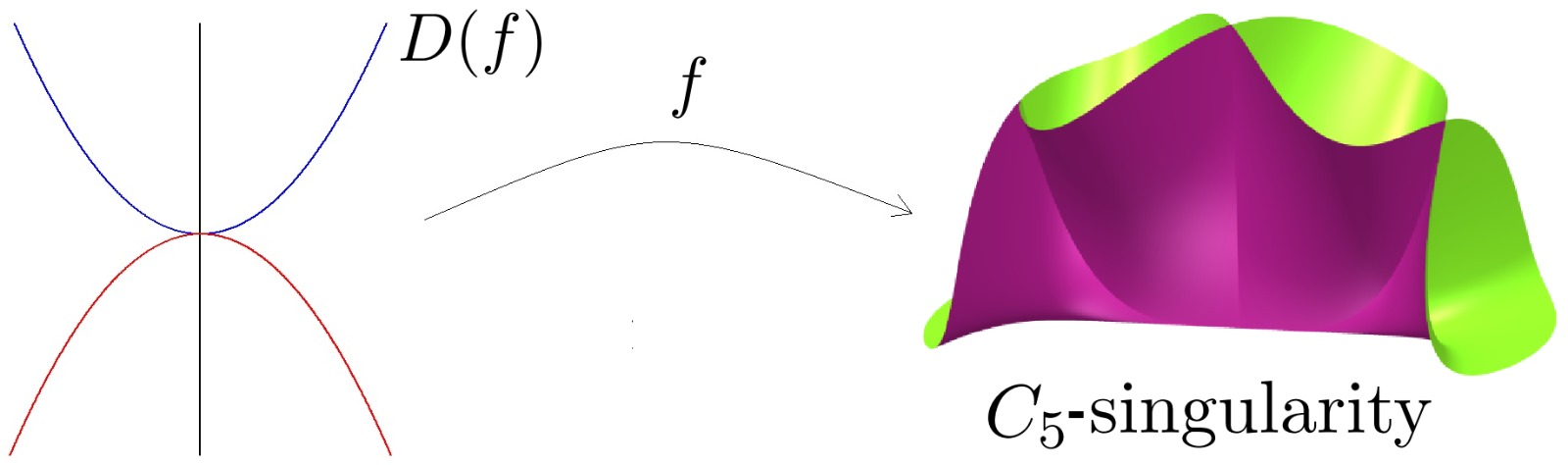} 
\caption{The singularity $C_5$ of Mond and
its double point curve $D(f)$ (real points).}\label{figura11}
\end{figure}

In fact, the image of any quasihomogeneous map germ in Mond's list is an example of a surface satisfying the conditions of Theorem \ref{mainresult}. Outside of Mond's list, we can also find examples of higher multiplicity in the literature satisfying the conditions of Theorem \ref{mainresult}. For example, we have the image of the map germs $f,g:(\mathbb{C}^2,0)\rightarrow (\mathbb{C}^3,0)$ defined by $f(x,y)=(x,y^4,x^5y-5x^3y^3+4xy^5+y^6)$ (\cite[Ex. 5.5]{Ruas}) with multiplicity $4$ and $g(x,y)=(x^n,y^m,(x+y)^k)$ (\cite[Ex. 16]{Guille}) with multiplicity $nm$ (where $n,m$ and $k$ are coprime in pairs and $n<m<k$) and also some classes of examples that appear in \cite{[3]}.
  
In a few words, the technique used to prove Theorem \ref{mainresult} is that under the assumptions we add, both germs of surfaces $(\textbf{X},0)$ and $(\textbf{Y},0)$ admit a special type of parametrization, which we will denote by $f,g$, respectively. The parametrizations $f$ and $g$ can be viewed as quasihomogeneous finitely determined map germs from $(\mathbb{C}^2,0)$ to $(\mathbb{C}^3,0)$ (whose images are $(\textbf{X},0)$ and $(\textbf{Y},0)$, respectively). In this way, we used some tools of the theory of finitely determined map germs to construct a proof for Theorem \ref{mainresult}.

The first part of this paper is devoted to studying the multiplicity of the image of a finite quasihomogeneous map germ from $(\mathbb{C}^2,0)$ to $(\mathbb{C}^3,0)$ (see Section \ref{multiplicidade}). Let $f:(\mathbb{C}^2,0)\rightarrow (\mathbb{C}^3,0)$ be finite map germ of degree one onto its image $(\textbf{X},0):=(f(\mathbb{C}^2),0)$. Consider a map germ $F:(\mathbb{C}^3,0)\rightarrow (\mathbb{C},0)$ such that $(\textbf{X},0)=(V(F),0)$. If $f$ is quasihomogeneous, then we obtain from \cite[Prop. 1.15]{form} that $F$ is quasihomogeneous, with weighted degree given by the product of the degrees of $f$ divided by the product of the weights of $f$. This leads to a natural question:

\begin{mybox}
\textbf{Question 1:} Let $f$ be as above, i.e., a quasihomogeneous finite map germ from $(\mathbb{C}^2,0)$ to $(\mathbb{C}^3,0)$ of degree one onto its image $(\textbf{X},0)$. Is the multiplicity of $(\textbf{X},0)$ determined by the weights and degrees of $f$?
\end{mybox}

Unfortunately, the answer to Question 1 is negative (see Example \ref{exquasihomog}). Note that we do not ask in Question 1 that the only singularities of $(\textbf{X},0)$ outside the origin are transverse double points. In this direction, we show that Question 1 has a positive answer if we assume that $f$ is finitely determined (see Lemma \ref{lema aux5}); this constitutes our second main result. As an application, in Section \ref{secZariski} we apply Lemma \ref{lema aux5} as a fundamental key to prove our main result (Theorem \ref{mainresult}).

 Based on the results in first part of the work, a natural question arises: can we extend the result to the case $n\ge3$? In other words, if we consider finitely determined, quasihomogeneous, singular map germs from $(\mathbb{C}^n,0)$ to $(\mathbb{C}^{n+1},0),$ do the results of Sections \ref{multiplicidade} and \ref{secZariski} hold for $n\ge3?$ The second part of this paper is devoted to extending the results in Sections \ref{multiplicidade} and \ref{secZariski} under some conditions. To do this, the guiding question for carrying out this second stage of the work is the following.

\begin{mybox}
\textbf{Question 2:} Let $f:(\mathbb{C}^n,0)\rightarrow(\mathbb{C}^{n+1},0)$ be a finitely determined, quasihomogeneous, singular map germ, with $n\ge3$. Denote by $(\textbf{X},0)=(f(\mathbb{C}^n),0).$ Is the multiplicity of $(\textbf{X},0)$ determined by the weights and degrees of $f$?
\end{mybox}

 In Section \ref{sec4} we provide a positive answer for Question 2 in corank 1 case (see Theorem \ref{lemamult}). Furthermore, in Section \ref{sec5} when $3\le n\le 4$ we present sufficient conditions on this setting to proof that Zariski's multiplicity conjecture for a pair of $n$-varieties in $(n+1)$-space holds. This is our third main result (see Theorem \ref{T2}).

\begin{theorem}\label{T2}
    Let $f,g:(\mathbb{C}^n,0)\rightarrow(\mathbb{C}^{n+1},0)$ be finitely determined, quasihomogeneous, corank 1 map germs, with $3\le n\le 4$. Denote by $(\textbf{X},0)=(f(\mathbb{C}^n),0)$ and $(\textbf{Y},0)=(g(\mathbb{C}^n),0).$ Suppose that $(\textbf{X},0)$ and $(\textbf{Y},0)$ are topologically equivalent. If the multiple point spaces $D^{n-1}(f)$ and $D^{n-1}(g)$ are smooth, then $m(\textbf{X},0)=m(\textbf{Y},0).$
\end{theorem}


Consider the map germs $f_1:(\mathbb{C}^3,0)\rightarrow(\mathbb{C}^4,0)$ and $f_2:(\mathbb{C}^4,0)\rightarrow(\mathbb{C}^5,0)$ given by $$f_1(x,y,z)=(x,y,z^3+xz,z^4+yz)$$ and $$f_2(x,y,z,w)=(x,y,z,w^2+xw,w^3+yw+zw^2).$$ One can verify that $f_1$ and $f_2$ satisfy the hypotheses of Theorem \ref{T2}. Thus, there exist examples satisfying the assumptions in Theorem \ref{T2}. In particular, the map germ $f_1$ is the $P_1$ singularity in the Houston and Kirk classification (see \cite[Table 1]{Houston}).

\section{Preliminaries}

$ \ \ \ \ $ Throughout the paper, we assume that $f:(\mathbb{C}^n,0)\rightarrow(\mathbb{C}^{n+1},0)$ is a finite, generically one-to-one, holomorphic map germ, unless otherwise stated. Throughout this paper, given a finite map $f:\mathbb{C}^n\rightarrow \mathbb{C}^{n+1}$, $(x_1,\dots,x_n)$ and $(X_1,\dots,X_{n+1})$ are used to denote systems of coordinates in $\mathbb{C}^n$ (source) and $\mathbb{C}^{n+1}$ (target), respectively. We use the standard notation of singularity theory as the reader can find in \cite{[7]}.
	
    \subsection{Multiple point space for map germs}\label{2.1}
	
	$ \ \ \ \ $ Multiple point spaces of a map germ from $(\mathbb{C}^n,0)$ to $(\mathbb{C}^p,0)$ with $n\leq p$ play an important role in the study of its geometry. In this section, we will deal only with double point space, which is a fundamental notion in the setting of this work.

Let us begin with the notion of double point space, denoted by $D^2(f),$ and its projection on the source, denoted by $D(f)$. Roughly speaking, $D^2(f)$ is the set of points $(\textbf{x},\textbf{x}^{'}) \in \mathbb{C}^n \times \mathbb{C}^n$ such that either $\textbf{x} \neq \textbf{x}^{'}$ and $f(\textbf{x})=f(\textbf{x}^{'}),$ or $\textbf{x}$ is a singular point of $f$. In order to see $D^2(f)$ as an analytic space, we need to present an appropriate analytic structure for it. We will follow the construction of \cite{[8]}, which is valid for holomorphic maps from $\mathbb{C}^n$ to $\mathbb{C}^p$, with $n\leq p$. 

Let us denote the diagonals of $\mathbb{C}^n \times \mathbb{C}^n$ and $\mathbb{C}^{p} \times \mathbb{C}^{p}$ by $\Delta_{\mathbb{C}^n}$ and $\Delta_{\mathbb{C}^{p}}$, respectively, and denote the sheaves of ideals defining them by $\mathcal{I}_n$ and $\mathcal{I}_{p}$, respectively. We write the points of $\mathbb{C}^n \times \mathbb{C}^n$ and $\mathbb{C}^{p} \times \mathbb{C}^{p}$ as $(\underline{x},\underline{x}^{'})$ and $(\underline{X},\underline{X}^{'})$, respectively.
Locally, 

\begin{center}
$\mathcal{I}_{n}=\langle x_1-x_1^{'},\cdots, x_n-x_n^{'} \rangle$ and $\mathcal{I}_{p}=\langle X_1-X_1^{'},\cdots, X_{p}-X_{p}^{'} \rangle$. 
\end{center}

Since the pull-back $(f \times f)^{\ast}\mathcal{I}_{p}$ is contained in $\mathcal{I}_{n}$ , there exist $\alpha_{ij}\in \mathcal{O}_{\mathbb{C}^{n} \times \mathbb{C}^n}$, such that
\[
f_{i}(\underline{x})-f_{i}(\underline{x}^{'})= \alpha_{i1}(\underline{x},\underline{x}^{'})(x_1-x_1^{'})+ \cdots + \alpha_{in}(\underline{x},\underline{x}^{'})(x_n-x^{'}_n), \ for \ i=1,\cdots,p.
\]

If $f(\underline{x})=f(\underline{x}^{'})$ and $\underline{x} \neq \underline{x}^{'}$, then every $n \times n$ minor of the matrix $\alpha=(\alpha_{ij})$ must vanish at $(\underline{x},\underline{x}^{'})$. We denote by $\mathcal{R}(\alpha)$ the ideal in $\mathcal{O}_{\mathbb{C}^{p}}$ generated by the $n\times n$ minors of $\alpha$. Then we define the \textit{double point space $D^2(f)$} (as a complex space) by
\[
D^{2}(f)=V((f\times f)^{\ast}\mathcal{I}_{p}+\mathcal{R}(\alpha)).
\]

Although the ideal $\mathcal{R}(\alpha)$ depends on the choice of the coordinate functions of $f$, in \cite{[8]} it is proved that $D^{2}(f)$ is well-defined. It is easy to see that the points in the underlying set of $D^{2}(f)$ are exactly the ones of type $(\underline{x},\underline{x}^{'})$ with $\underline{x} \neq \underline{x}^{'}$, $f(\underline{x})=f(\underline{x}^{'})$ and the ones of type $(\underline{x},\underline{x})$ such that $\underline{x}$ is a singular point of $f$.

Let $f:(\mathbb{C}^n,0)\rightarrow(\mathbb{C}^{p},0)$ be a finite map germ and take a representative of $f$ defined on a small enough open neighbourhood of the origin. Denote by $I_{p}$ and $R(\alpha)$ the stalks at $0$ of $\mathcal{I}_{p}$ and $\mathcal{R}(\alpha)$. We define \textit{the double point space of the map germ $f$} as the complex space germ 

\[
(D^{2}(f),0)=(V((f \times f)^{\ast}I_{p}+R(\alpha)),0)
\]

\begin{remark}
    In the special case where $p=n+1$ and $f$ has corank 1, we can present equations for $D^2(f)$. Let $f:(\mathbb{C}^n,0)\rightarrow(\mathbb{C}^{n+1},0)$ be a finite corank 1 map germ. In this case, we can write $f$ in a normal form $$f(\textbf{x},y)=(\textbf{x},p(\textbf{x},y),q(\textbf{x},y)),$$ where $\textbf{x}=(x_1,\dots,x_{n-1})\in\mathbb{C}^{n-1},\,y\in\mathbb{C},$ for some $p,q:(\mathbb{C}^n,0)\rightarrow(\mathbb{C},0).$ In this setting, one can compute explicit equations for $D^2(f)$ via the method of the divided differences as in following: define $$\widetilde{p}(\textbf{x},y,z)=\dfrac{p(\textbf{x},y)-p(\textbf{x},z)}{y-z}\ \ \ \text{and}\ \ \ \widetilde{q}(\textbf{x},y,z)=\dfrac{q(\textbf{x},y)-q(\textbf{x},z)}{y-z}.$$ These are well-defined functions and we have that $$(D^2(f),0)=(V(\widetilde{p},\widetilde{q}),0).$$
\end{remark}

Now we consider the dimension $p=n+1$ case. Another important space to study the topology of $f(\mathbb{C}^{n})$ is the double point space $D(f)$ in the source. According to the literature (see \cite{[12]}), an appropriate analytic structure for this space is the one given by Fitting ideals. In what follows, $f_{\ast}\mathcal{O}_n$ denotes $\mathcal{O}_n$ considered as an $\mathcal{O}_{n+1}$-module, via composition with $f$, and $Fitt_k(f_{\ast}\mathcal{O}_n)$ is its $k$th-Fitting ideal. The analytic structure of this space is defined more precisely in the following definition:

\begin{definition} Let $U \subset \mathbb{C}^n$ and $V \subset \mathbb{C}^{n+1}$ be open sets. Suppose that the map $f:U\rightarrow V$ is finite, that is, holomorphic, closed and finite-to-one. Let ${\pi}|_{D^2(f)}:D^2(f) \subset U \times U \rightarrow U$ be the restriction to $D^2(f)$ of the projection $\pi$ which is the projection onto the first factor. The \textit{double point space} is the complex space $$D(f)=V(Fitt_0({\pi}_{\ast}\mathcal{O}_{D^2(f)})).$$ Set theoretically we have the equality $D(f)=\pi(D^{2}(f))$.
\end{definition}

\begin{remark}\label{remarkdimdf}(a) If $f:U \subset \mathbb{C}^n \rightarrow V \subset \mathbb{C}^{n+1} $ is finite and generically one-to-one, then $D^2(f)$ is Cohen-Macaulay and has dimension $n-1$ \rm(\textit{see} \rm\cite[\textit{Prop.} \rm 2.1]{[11]})\textit{. In particular, $(D(f),0)$ is a germ of hypersurface in $(\mathbb{C}^n,0),$ the source of $f$.}\\

\noindent\textit{(b) In this work, except for $k=2$, we only study the multiple point spaces $D^k(f)$ in the corank 1 case. For definitions of these spaces for $k>2$ in corank 1 case, see \rm{\cite[\textit{Sec.} 9.5]{[7]}}}.
\end{remark}

\subsection{Finite determinacy, crosscaps and triple points}

$ \ \ \ \ $ The notion of a finitely determined map germ from $(\mathbb{C}^n,0)$ to $(\mathbb{C}^{n+1},0)$ is central in this work. To that end, we present the classical definition of this notion. In the particular case where $n=2,$ the number of crosscaps and triple points that appear in a stabilization of a finitely determined map germ from $(\mathbb{C}^2,0)$ to $(\mathbb{C}^3,0)$ also plays a very important role in proving the first main result of this work, as we will see in the proof of Theorem \ref{mainresult}.

\begin{definition}(a) Two map germs $f,g:(\mathbb{C}^n,0)\rightarrow (\mathbb{C}^{n+1},0)$ are $\mathcal{A}$-equivalent, denoted by $g\sim_{\mathcal{A}}f$, if there exist germs of diffeomorphisms $\Phi:(\mathbb{C}^n,0)\rightarrow (\mathbb{C}^n,0)$ and $\Psi:(\mathbb{C}^{n+1},0)\rightarrow (\mathbb{C}^{n+1},0)$, such that $g=\Psi \circ f \circ \Phi$.\\

 \noindent(b) A map germ $f:(\mathbb{C}^n,0) \rightarrow (\mathbb{C}^{n+1},0)$ is $\mathcal{A}$-finitely determined (or ``finitely determined'' for simplicity) if there exists a positive integer $k$ such that for any $g$ with $k$-jets satisfying $j^kg(0)=j^kf(0)$ we have $g \sim_{\mathcal{A}}f$.
\end{definition}

\begin{remark}\label{remarktriplepoints} (a) Consider a finite map germ $f:(\mathbb{C}^n,0)\rightarrow (\mathbb{C}^{n+1},0)$. By Mather-Gaffney criterion \rm(\cite[\textit{Th}. \rm 2.1]{Wall})\textit{, $f$ is finitely determined if and only if there is a finite representative $f:U \rightarrow V$, where $U\subset \mathbb{C}^n$, $V \subset \mathbb{C}^{n+1}$ are open neighbourhoods of the origin, such that $f^{-1}(0)=\lbrace 0 \rbrace$ and the restriction $f:U \setminus \lbrace 0 \rbrace \rightarrow V \setminus \lbrace 0 \rbrace$ is stable.} \\

\noindent\textit{(b) When $n=2$, item (a) implies that the only singularities of $f$ on $U \setminus \lbrace 0 \rbrace$ are crosscaps (or Whitney umbrellas), transverse double points and triple points. By shrinking $U$ if necessary, we can assume that there are no crosscaps or triple points in $U$. Then, since we are in the nice dimensions of Mather }\rm(\cite[\textit{p}. \rm 208]{Mather})\textit{, we can take a stabilization $\mathcal{F}$ of $f$}, 

\begin{center}
$\mathcal{F}:U \times T \rightarrow \mathbb{C}^{4}$, $\mathcal{F}(x,y,t)=(f_{t}(x,y),t)$, 
\end{center}

\noindent \textit{where $T$ is a neighbourhood of $0$ in $\mathbb{C}$. It is well known that the number of cross-caps of $f_t$ (denoted by $C(f)$) and the number of triple points of $f_t$ (denoted by $T(f)$) are independent of the particular choice of the stabilization (see for instance} \rm{\cite{mond7}}\textit{). These are analytic invariants of $f$ and they can be computed as follows (see }\rm{\cite{[8]}}\textit{):}
\[
  C(f)=dim_{\mathbb{C}}\frac{\mathcal{O}_2}{Rf} \ \ \ \ \ \  \quad T(f)=dim_{\mathbb{C}}\frac{\mathcal{O}_3}{Fitt_2(f_{\ast}\mathcal{O}_2)}
\]
\noindent \textit{where $Rf$ is the ideal generated by the maximal minors of the jacobian matrix of $f$, called the ramification ideal of $f$.}

\end{remark}

We remark that the space $D(f)$ plays a fundamental role in the study of the finite determinacy. In \cite[Th. 2.14]{MararMond}, Marar and Mond presented necessary and sufficient conditions for a corank $1$ map germ from $(\mathbb{C}^n,0)$ to $(\mathbb{C}^p,0)$ to be finitely determined in terms of properties of $D^2(f)$ and other multiple points spaces. Also in $n=2$ case, Marar, Nu\~{n}o-Ballesteros and Pe\~{n}afort-Sanchis extended this criterion of finite determinacy to the corank $2$ case (\cite[Th. 3.4 and Cor. 3.5]{[11]}). They proved the following result.

\begin{theorem}\rm(\cite{MararMond}, \cite{[11]})\label{criterio} \textit{
Let $f:(\mathbb{C}^2,0)\rightarrow(\mathbb{C}^{3},0)$ be a finite and generically $1$ $-$ $1$ map germ. Then $f$ is finitely determined if and only if the Milnor number of $D(f)$ at $0$ is finite.}
\end{theorem}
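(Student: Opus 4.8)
The plan is to reduce the statement to a clean dichotomy about the plane curve $D(f)$ and then feed it into the Mather--Gaffney criterion. First I would record the structural input already available: since $f$ is finite and generically $1$-to-$1$, Remark \ref{remarkdimdf} gives that $D^2(f)$ is Cohen--Macaulay of dimension $1$, so its projection $D(f)=V(Fitt_0(\pi_*\mathcal{O}_{D^2(f)}))$ is a plane curve, i.e. $D(f)=V(\lambda)$ for a single $\lambda\in\mathcal{O}_2$. For such a hypersurface germ one has $\mu(D(f),0)=\dim_{\mathbb C}\mathcal{O}_2/(\partial\lambda/\partial x,\partial\lambda/\partial y)$, and this is finite if and only if $\lambda$ has an isolated critical point, equivalently $D(f)$ is reduced with an isolated singularity at $0$ (a non-reduced factor would force the partials to vanish along a curve and make $\mu$ infinite; a reduced plane curve automatically has isolated singular locus). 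Thus the theorem is equivalent to: $f$ is finitely determined $\Leftrightarrow$ after shrinking the representative, $D(f)\setminus\{0\}$ is smooth. By the Mather--Gaffney criterion recalled in Remark \ref{remarktriplepoints}, the left-hand side is in turn equivalent to $f$ being stable on a punctured neighbourhood $U\setminus\{0\}$, so the whole problem becomes the equivalence between stability of $f$ off the origin and smoothness of $D(f)$ off the origin.

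For the direction ($\Rightarrow$) I would argue directly. Assuming $f$ finitely determined, Mather--Gaffney lets me choose a representative whose only singularities on $U\setminus\{0\}$ are stable; since $C(f)$ and $T(f)$ are finite (Remark \ref{remarktriplepoints}), the cross-caps and triple points form a discrete set, so after further shrinking $U$ every multi-germ of $f$ on $U\setminus\{0\}$ is either an immersion or a transverse double point. In both local models the contribution to $D(f)$ near each source point is a single smooth branch (at a transverse double point the double-point set through each of the two preimages is the smooth preimage of the intersection line of the two sheets; one checks likewise that even a cross-cap yields a smooth $D(f)$, e.g. $(x,y)\mapsto(x,y^2,xy)$ gives $D(f)=\{x=0\}$). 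Hence $D(f)\setminus\{0\}$ is smooth, $D(f)$ has an isolated singularity at $0$, and $\mu(D(f),0)<\infty$.

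For the direction ($\Leftarrow$) I would start from $\mu(D(f),0)<\infty$, so that $D(f)\setminus\{0\}$ is smooth after shrinking, and upgrade this planar information to full stability of $f$ on $U\setminus\{0\}$. The route here is the multiple-point-space criterion for finite determinacy of Marar--Mond in the corank $1$ case and its extension by Marar, Nu\~no-Ballesteros and Pe\~nafort-Sanchis (\cite{ref7},\cite{ref9}): stability of $f$ off $0$ is controlled by $D^2(f)$ and $D^3(f)$ having the expected structure (an ICIS curve $D^2(f)$ whose projection is $D(f)$, with $D^3(f)$ zero-dimensional) away from the fibre over $0$. Using the finite symmetric projection $\pi\colon D^2(f)\to D(f)$, I would show that smoothness of the image curve $D(f)$ on $U\setminus\{0\}$ forces $D^2(f)$ to be smooth (resp. nodal over triple points) and $D^3(f)$ to be empty away from $0$, which are exactly the normal forms of immersions, cross-caps, and transverse double and triple points; these being stable, $f$ is stable on $U\setminus\{0\}$, whence finitely determined by Mather--Gaffney.

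The main obstacle I expect is precisely this last transfer. The projection $\pi$ can be $2$-to-$1$ on fold-type components and generically $1$-to-$1$ on identification components, so smoothness of the \emph{image} curve $D(f)$ does not transparently read off the local geometry of the \emph{source} multiple-point spaces; one must verify that no hidden degeneration of $D^2(f)$ or accumulation of triple points can occur while $D(f)$ stays smooth. Handling this carefully in the corank $2$ setting, where $D^2(f)$ need not be a complete intersection and the full \cite{ref9} machinery is required, is the delicate step; the corank $1$ case is cleaner because there $D^2(f)\subset\mathbb{C}^3$ is cut out by two equations and ``ICIS'' reduces to having an isolated singularity, which matches the planar condition on $D(f)$ almost immediately.
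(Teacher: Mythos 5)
First, a point of reference: the paper does not prove this statement at all --- Theorem \ref{criterio} is quoted from \cite{ref7} and \cite{ref9}, so there is no in-paper proof to compare yours against. Judged on its own terms, your proposal has the right architecture (it is in fact the architecture of the proof in \cite{ref9}): reduce via Mather--Gaffney to the equivalence ``$f$ stable on $U\setminus\{0\}$ $\Leftrightarrow$ $D(f)$ smooth and reduced on $U\setminus\{0\}$,'' and your forward implication is essentially complete --- finiteness of the colengths defining $C(f)$ and $T(f)$ does force the cross-cap and triple-point loci to be just the origin, and the local stable models (immersion, cross-cap, transverse double point) do yield a smooth reduced Fitting-ideal structure on $D(f)$, so $\mu(D(f),0)<\infty$ follows.

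The genuine gap is the converse, and you have named it yourself without closing it. The statement ``smoothness (and reducedness) of the scheme $D(f)=V(Fitt_0(\pi_*\mathcal{O}_{D^2(f)}))$ at a point forces the multigerm of $f$ over that point to be stable'' is the entire content of the theorem: a priori a non-stable monogerm (e.g.\ a degenerate cross-cap such as $(x,y)\mapsto(x,y^2,y^3)$, which is finite, injective, and has $D(f)$ a non-reduced line) or a degenerate bigerm could conceivably hide behind a geometrically innocuous-looking double point curve, and ruling this out requires the full analysis of $D^2(f)$, its symmetric structure, and the triple point space carried out in \cite{ref7} (corank $1$) and \cite{ref9} (corank $2$, where $D^2(f)$ is only Cohen--Macaulay, not a complete intersection). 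Writing ``I would show that smoothness of the image curve forces $D^2(f)$ to be smooth \dots{} and $D^3(f)$ to be empty away from $0$'' and then deferring to ``the full \cite{ref9} machinery'' is a description of the missing proof, not the proof. There is also a small internal inconsistency in that passage: if $D(f)$ is smooth away from the origin there can be no triple points away from the origin at all (each would contribute a node to $D(f)$ at each of its three preimages), so the ``nodal over triple points'' alternative you allow for $D^2(f)$ is vacuous; this is harmless but signals that the case analysis in the converse direction has not actually been pinned down. To make this a self-contained proof you would need to carry out that local classification; as written, the hard half of the theorem is still being outsourced to the references it is quoted from.
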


\subsection{Quasihomogeneous map germs from $(\mathbb{C}^n,0)$ to $(\mathbb{C}^{n+1},0)$}

$ \ \ \ \  $ In this work, we also aim to study quasihomogeneous map germs. Thus, it is convenient to present a precise definition of this kind of map.

\begin{definition}
    A polynomial $p(x_1,\dots,x_n)$ is said to be quasihomogeneous if there exists positive integers $w_1,\dots,w_n$ with no common factor and a positive integer $d$ such that $$p(k^{w_1}x_1,\dots,k^{w_n}x_n)=k^dp(x_1,\dots,x_n).$$ The integer $w_i$ is called the weight of the variable $x_i$ and $d$  is called the weighted degree of $p.$ In this case, we say that $p$ is of type $(d;w_1,\dots,w_n).$ We say that $f:(\mathbb{C}^n,0)\rightarrow(\mathbb{C}^p,0)$  is a quasihomogeneous map germ of type $(d_1,\dots,d_p;w_1,\dots,w_n)$ if each coordinate function $f_i$ is quasihomogeneous of type $(d_i;w_1,\dots,w_n).$ In particular, when $(n,p)=(2,3),$ we have that $f$ is quasihomogeneous of type $(d_1,d_2,d_3;w_1,w_2).$
\end{definition}

Note that if $f:(\mathbb{C}^n,0)\rightarrow (\mathbb{C}^{n+1},0)$ is a corank $1$ map germ then we can write $f$ in the normal form $$f(x_1,\dots,x_n)=(x_1,\dots,x_{n-1},p(x_1,\dots,x_n),q(x_1,\dots,x_n)).$$ If in addition $n=2$ and $f$ is quasihomogeneous map germ, then we can write $p(x,y)=\theta_1 y^n +x\tilde{p}(x,y)$ and $q(x,y)=\theta_2y^m+x\tilde{q}(x,y)$, for some $n,m \in \mathbb{N}$, $\theta_i \in \mathbb{C}$ and $\tilde{p},\tilde{q} \in \mathcal{O}_2$ with $\tilde{p}(x,0)=\tilde{q}(x,0)=0$. This is explained more precisely in the following lemma, where a normal form for $f$ which is more convenient for our purposes is presented.

\begin{lemma}\rm(\cite[\textit{Lemma} \rm 2.11]{normalform})\label{lemma corank 1} \textit{Let $g(x,y)=(g_1(x,y),g_2(x,y),g_3(x,y))$ be a finitely determined, quasihomogeneous, corank 1 map germ of type $(d_1,d_2,d_3; w_1,w_2)$. Then $g$ is $\mathcal{A}$-equivalent to a quasihomogeneous map germ $f$ with type $(d_{i_1}=w_1,d_{i_2},d_{i_3};w_1,w_2)$, which is written in the form}

\begin{equation}\label{eq14} 
f(x,y)=(x, y^n+xp(x,y), \beta y^m+ xq(x,y)),
\end{equation}

\noindent \textit{for some integers $n,m\geq 2$, $\beta \in \mathbb{C}$, $p,q \in \mathcal{O}_2$, $p(x,0)=q(x,0)=0$, where $(d_{i_1},d_{i_2},d_{i_3})$ is a permutation of $(d_1,d_2,d_3)$ such that $d_{i_2}\leq d_{i_3}$.}
\end{lemma}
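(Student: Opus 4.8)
The plan is to put $g$ into the stated form by a sequence of quasihomogeneous coordinate changes, at each step reading off the admissible monomials directly from the weights. First I would use the corank $1$ hypothesis. The $1$-jet of $g$ has rank $1$, and since each $g_i$ is quasihomogeneous of weighted degree $d_i$, its linear part can only be a nonzero multiple of $x$ (when $d_i=a$), a nonzero multiple of $y$ (when $d_i=b$), or zero; rank one forbids the linear parts from involving both variables at once. Hence, after relabelling the source variables if necessary (which at worst interchanges the roles of the two weights), I may assume $\ker dg_0=\langle\partial/\partial y\rangle$, so that the weight $a$ belongs to the regular direction $x$ and some coordinate $g_{i_1}$ has a nonzero linear part, forcing $d_{i_1}=a$. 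The only monomials of weighted degree $a$ are $x$ and, when $b=1$, $y^{a}$, so $g_{i_1}=cx+c'y^{a}$ exactly with $c\ne 0$; the assignment $(x,y)\mapsto(cx+c'y^{a},y)$ is then a quasihomogeneous diffeomorphism of the source after which $g_{i_1}=x$ becomes the first coordinate.

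Next I would clean up the two remaining coordinates. Each $g_j$ is quasihomogeneous of degree $d_j$, so every one of its monomials is either the pure power $y^{d_j/b}$ (present only if $b\mid d_j$) or divisible by $x$; the pure power $x^{d_j/a}$ (present only if $a\mid d_j$) is removed by the target change $X_j\mapsto X_j-\text{(const)}\,X_{i_1}^{d_j/a}$, which is quasihomogeneous precisely because $g_{i_1}=x$. Writing the $x$-divisible part as $x\,r_j(x,y)$, the absence of a pure $x$-power is exactly the statement $r_j(x,0)=0$, so $g_j=\theta_j\,y^{d_j/b}+x\,r_j$ with $r_j(x,0)=0$ (and $\theta_j=0$ unless $b\mid d_j$). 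Since $\ker dg_0=\langle\partial/\partial y\rangle$ no coordinate carries a $y$-linear term, so whenever $\theta_j\ne 0$ the exponent $d_j/b$ is at least $2$.

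To finish I would use finiteness together with quasihomogeneity to fix the coefficients and the ordering. Because $g$ is finite, $g^{-1}(0)=\{0\}$, and as $g_{i_1}(0,y)=0$ this forces $g(0,y)\not\equiv 0$, i.e. at least one of the two remaining coordinates carries a genuine power $y^{r}$ with $r\ge 2$. I designate that coordinate as the middle one and rescale it (a weighted scaling of the target) so that its leading coefficient is $1$, producing $y^{n}+xp$ with $n\ge 2$ and $p(x,0)=0$. The last coordinate is then $\beta y^{m}+xq$ with $q(x,0)=0$, where $\beta$ may vanish and $m\ge 2$ whenever $\beta\ne 0$. Finally I permute the last two coordinates so that $d_{i_2}\le d_{i_3}$.

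I expect the main obstacle to be exactly this last ordering step together with its compatibility with the monic middle coordinate: the slot of weighted degree $d_{i_2}$ must be the one carrying the normalized power $y^{n}$, so one has to verify that, for a finitely determined germ equipped with its canonical weight system, the coordinate lacking a pure $y$-power is always the one of larger degree (as happens for $C_5$ and for the cross-cap). Securing this — rather than the essentially bookkeeping coordinate changes above — is where finite determinacy, and not merely finiteness, must enter; one must also check that every diffeomorphism used is genuinely weight-preserving, so that $f$ remains quasihomogeneous with degrees a permutation of $(d_1,d_2,d_3)$.
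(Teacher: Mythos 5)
The paper does not actually prove this lemma; it is quoted from \cite[Lemma 2.11]{otonielformulaJ}, so your argument has to be judged on its own terms. The reductions you carry out are correct and are the natural ones: corank $1$ together with quasihomogeneity forces some coordinate to have weighted degree equal to a weight and to equal $cx+c'y^{a}$ (after possibly renaming the variables, which also swaps the weights --- a cosmetic point worth recording, since the statement keeps the pair $(a,b)$ fixed); a weight-preserving source change turns it into $x$; quasihomogeneous target changes using this first coordinate remove the pure $x$-powers from the other two components; and finiteness, via $g^{-1}(0)=\lbrace 0\rbrace$ applied along the $y$-axis, guarantees that at least one remaining component carries a pure power $y^{r}$, with $r\geq 2$ because the $y$-direction is the kernel of $dg_0$.

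The gap is the step you flag and then leave open: you never prove that the component carrying the pure $y$-power can be placed in the slot of smaller weighted degree, and the claim you say ``one has to verify'' is in fact false as stated. Take $g(x,y)=(x,xy,y^{2})$ with weights $(1,b)$, $b\geq 2$: this is a corank $1$, finitely determined (it is the cross-cap, hence stable) quasihomogeneous germ of type $(1,1+b,2b;1,b)$; the smaller-degree component $xy$ has no pure $y$-power, and no normal form of the asserted type can exist because there is no monomial $y^{n}$ of weighted degree $1+b$ when $w(y)=b\geq 2$. So the lemma needs the cross-cap excluded (or the ordering $d_{i_2}\leq d_{i_3}$ relaxed), and a complete proof must show this is the \emph{only} exception. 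That is exactly where finite determinacy enters: if the smaller-degree component has no pure $y$-power while the larger one contains $\theta y^{m}$ with $\theta\neq 0$, then $f$ restricted to $V(x)\subseteq D(f)$ is generically $m$-to-$1$; since off the origin a finitely determined germ has only transverse double points along curves, this forces $m=2$, hence $d_{i_3}=2b$; the smaller-degree component is then $xyh$ with $h$ quasihomogeneous of degree less than $b-a$, hence a pure power $cx^{e}$, and reducedness of $D^{2}(f)$ forces $e=0$, so $D(f)=V(x)$ is smooth and $g$ is stable, i.e.\ the cross-cap. Without this (or an equivalent) argument, and without explicitly disposing of the stable case, the proof is incomplete precisely at the one point that is not bookkeeping.
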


In Section \ref{sec4}, we will present a quasihomogeneous normal form for finitely determined, quasihomogeneous, corank 1 map germs from $(\mathbb{C}^n,0)$ to $(\mathbb{C}^{n+1},0),$ with $n\ge3$ (see Lemma \ref{forma}), which is a generalization of Lemma \ref{lemma corank 1}. Returning to the $n=2$ case, in \cite{form}, Mond showed that if $f$ is finitely determined and quasihomogeneous then the invariants $C(f)$, $T(f)$ and also the $\mathcal{A}_e-$codimension of $f$ (denoted by $\mathcal{A}_e$-codim$(f)$) are determined by the weights and the degrees of $f$ (see \cite{[7]} for the definiton of $\mathcal{A}_e-$codimension of $f$). More precisely, he showed the following result.

\begin{theorem}\label{mondformulas} \rm({\cite{form}}) \textit{Let $f:(\mathbb{C}^2,0)\rightarrow (\mathbb{C}^3,0)$ be a finitely determined, quasihomogeneous map germ of type $(d_1,d_2,d_3;w_1,w_2)$. Then}

\begin{center}
$C(f)=\dfrac{1}{w_1w_2}\bigg((d_2-w_1)(d_3-w_2)+(d_1-w_2)(d_3-w_2)+(d_1-w_1)(d_2-w_1)\bigg)$, $ \ \ \ $  $ \ \ \ $ $T(f)=\dfrac{1}{6w_1w_2}(\delta-\epsilon)(\delta-2\epsilon)+\dfrac{C(f)}{3}$ $ \ \ \ $ and $ \ \ \ $ $\mathcal{A}_e-\text{codim}(f)=\dfrac{1}{2}\left(\mu(D(f))-4T(f)+C(f)-1 \right)$
\end{center}

\noindent \textit{where $\epsilon = d_{1}+d_{2}+d_{3}-w_1-w_2$ and $\delta=d_{1}d_{2}d_{3}/(w_1w_2)$.}

\end{theorem}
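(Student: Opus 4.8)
The plan is to treat each of the three invariants as the colength of a quasihomogeneous ideal (or a Milnor number) and to exploit the grading so that every such colength becomes the value at $t=1$ of a rational Hilbert series whose numerator is read off from the weighted degrees appearing in a graded free resolution. Here $\mathbb{C}^2$ is graded by the weights $a,b$ of $x,y$, so its coordinate ring has Hilbert series $1/((1-t^a)(1-t^b))$, while the target $\mathbb{C}^3$ is graded by $d_1,d_2,d_3$ (as $X=f_1,Y=f_2,Z=f_3$ carry those weights).

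First I would compute $C(f)=\dim_{\mathbb{C}}\mathcal{O}_2/Rf$. The Jacobian of $f$ is a $3\times2$ matrix $J$ whose entry $\partial f_i/\partial x_j$ is quasihomogeneous of weighted degree $d_i-w_j$ (with $w_1=a$, $w_2=b$), and $Rf=I_2(J)$ is its ideal of $2\times2$ minors, of degrees $d_i+d_j-a-b=\epsilon-d_k$. By Remark \ref{remarktriplepoints} the cross-cap locus is $\{0\}$, so $Rf$ has grade $2$ and $\mathcal{O}_2/Rf$ is Cohen--Macaulay of codimension $2$; the Hilbert--Burch theorem then identifies $J$ itself as the presentation matrix of the length-two resolution
\[
0\to\mathcal{O}_2^2\xrightarrow{\,J\,}\mathcal{O}_2^3\to\mathcal{O}_2\to\mathcal{O}_2/Rf\to 0 .
\]
Tracking degrees forces the three middle generators into degrees $\epsilon-d_1,\epsilon-d_2,\epsilon-d_3$ and the two leftmost into degrees $\epsilon-a,\epsilon-b$, so the Hilbert-series numerator is $N(t)=1-\sum_i t^{\epsilon-d_i}+t^{\epsilon-a}+t^{\epsilon-b}$. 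One checks $N(1)=N'(1)=0$, whence $C(f)=N''(1)/(2ab)=(\epsilon^2+a^2+b^2-d_1^2-d_2^2-d_3^2)/(2ab)$; expanding $\epsilon^2$ rearranges this into Mond's symmetric expression. Note this step needs no assumption on the corank.

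For $T(f)=\dim_{\mathbb{C}}\mathcal{O}_3/Fitt_2(f_\ast\mathcal{O}_2)$ I would reduce via Lemma \ref{lemma corank 1} to the corank-$1$ form $f=(x,y^n+xp,\beta y^m+xq)$ and use the Marar--Mond description of the triple-point scheme $D^3(f)$: in coordinates $(x,y_1,y_2,y_3)$ of weights $(a,b,b,b)$ it is the quasihomogeneous complete intersection cut out by the divided differences of the two non-trivial components, of weighted degrees $d_2-b,d_2-2b,d_3-b,d_3-2b$, with $T(f)=\tfrac16\dim_{\mathbb{C}}\mathcal{O}_{D^3(f)}$. The complete-intersection colength is the product $(d_2-b)(d_2-2b)(d_3-b)(d_3-2b)/(ab^3)$, and a direct manipulation (using $\delta=d_1d_2d_3/(ab)$ with $d_1=a$ in this normal form) rewrites $\tfrac16$ of it as $\tfrac{1}{6ab}(\delta-\epsilon)(\delta-2\epsilon)+\tfrac13 C(f)$.

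Finally, for $Ae\text{-}cod(f)$ I would invoke $Ae\text{-}cod(f)=\mu_I(f)$, valid for quasihomogeneous germs in these dimensions, together with the topological relation between the image Milnor number and the double-point geometry obtained from a stable perturbation, which in the form $2\mu_I(f)=\mu(D(f))-4T(f)+C(f)-1$ yields the stated formula; since $D(f)$ is itself a quasihomogeneous plane curve, $\mu(D(f))$ is computable from its weighted degree, so the identity can also be checked directly against the formulas for $C(f)$ and $T(f)$. The main obstacle is structural rather than computational: one must guarantee the expected (perfect, respectively complete-intersection) structure so that colengths depend only on weighted degrees --- that $Rf$ has grade exactly $2$ and that $D^3(f)$ is a quasihomogeneous isolated complete intersection singularity of the expected dimension --- and one must extend the triple-point count and the $Ae\text{-}cod$ relation from the corank-$1$ normal form to the general corank-$2$ case.
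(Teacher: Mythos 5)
This statement is not proved in the paper at all: it is quoted verbatim from Mond's 1991 article \cite{mondformulas}, so there is no internal proof to compare against, and your proposal has to be judged as a reconstruction of Mond's argument. Your treatment of $C(f)$ is complete and correct, and is in fact essentially Mond's own: finite determinacy forces $V(Rf)=\{0\}$, so $I_2(J)$ has grade $2$, Hilbert--Burch gives the graded resolution with twists $\epsilon-d_i$ and $\epsilon-a,\epsilon-b$, and the evaluation $C(f)=N''(1)/(2ab)=(\epsilon^2+a^2+b^2-d_1^2-d_2^2-d_3^2)/(2ab)$ does expand to the displayed symmetric expression (both equal $\sum_{i<j}d_id_j-(a+b)\sum_i d_i+a^2+ab+b^2$ over $ab$). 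Your $T(f)$ computation via the divided-difference presentation of $D^3(f)$ is also correct as far as it goes: the weighted B\'ezout count $(d_2-b)(d_2-2b)(d_3-b)(d_3-2b)/(6ab^3)$ does reduce, when $d_1=a$, to $\frac{1}{6ab}(\delta-\epsilon)(\delta-2\epsilon)+\frac{C(f)}{3}$, using $\delta-\epsilon=(d_2-b)(d_3-b)/b$ and $\delta-2\epsilon=((d_2-2b)(d_3-2b)-2b^2)/b$.

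The genuine gap is the one you flag but do not close: the theorem carries no corank hypothesis, and the paper actually invokes it for germs that are not of corank $1$ (e.g.\ the type $(2,2,2;a,b)$ germs in Case A of the proof of Lemma \ref{lema aux5}, and quasihomogeneous double folds $(x^2,y^2,h)$ elsewhere). Your route to $T(f)$ through Lemma \ref{lemma corank 1} and the Marar--Mond spaces $D^k(f)$ is intrinsically a corank-$1$ argument, so as written the second formula is only established in that case. Mond's actual derivation computes $T(f)$ as the colength of $\mathrm{Fitt}_2(f_{\ast}\mathcal{O}_2)$ from a graded presentation matrix of $f_{\ast}\mathcal{O}_2$ over $\mathcal{O}_3$, using the expected-codimension (Thom--Porteous type) formula for the degree of a determinantal ideal; that argument is corank-free and is what you would need to substitute here. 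A second, subtler issue concerns the third formula: the identity $A_e\text{-}cod(f)=\mu_I(f)$ for quasihomogeneous germs in these dimensions is itself one of the main theorems of \cite{mondformulas}, and its proof there passes through explicit weighted-degree computations of both sides; quoting it to derive the displayed expression for $A_e\text{-}cod(f)$ therefore risks circularity unless you point to an independent proof of that equality (the relation $2\mu_I=\mu(D(f))-4T+C-1$ itself, coming from the disentanglement and the image-computing spectral sequence, is fine).
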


    \section{The multiplicity of the image of a finite map germ for the case $n=2$}\label{multiplicidade}

$ \ \ \ \ $ In Introduction we give the definition of the multiplicity of a germ of hypersurface $(\textbf{X},0)=(V(F),0)$ in terms of the multiplicity of $F:(\mathbb{C}^n,0)\rightarrow (\mathbb{C},0)$. When $(\textbf{X},0)$ is the image of a generically one-to-one map germ $f:(\mathbb{C}^n,0)\rightarrow (\mathbb{C}^{n+1},0)$, another way to calculate the multiplicity of $(\textbf{X},0)$ is using a linear generic projection from $(\mathbb{C}^{n+1},0)$ on $(\mathbb{C}^n,0)$ restricted to $(\textbf{X},0)$. This is explained in the following remark.

\begin{remark}\label{remarkMulti} Let $(X,0)$ and $(Y,0)$ be germs of irreducible analytic sets. Let $f:(X,0)\rightarrow (Y,0)$ be a finite surjective analytic map germ. We denote the degree of a map $f$ by $deg(f)$. Roughly speaking, the degree of $f$ is the number of pre-images of a generic value in the image of $f$. The precise definition is given for instance in \rm{\cite[\textit{Def.} D.2]{[7]}}\textit{. An important fact about the concept of the degree of a map is its multiplicative property. Suppose $g:(Y,0)\rightarrow(W,0)$ is a finite surjective analytic map germ, where $(W,0)$ is an irreducible analytic set, then $deg(g\circ f)=deg(g)\cdot deg(f)$.}
	
 \textit{Suppose that $(X,0)$ is of dimension $n$ and $(X,0)\subset (\mathbb{C}^{n+1},0)$. Let $l_1,l_2,\cdots, l_n:(\mathbb{C}^{n+1},0)\rightarrow (\mathbb{C},0)$ be generic (reduced) linear forms. Let $\pi:(X,0)\rightarrow (\mathbb{C}^n,0)$, $\pi=(l_1,\cdots, l_n)$, be the restriction to $X$ of the (generic) linear projection $\pi$ from $\mathbb{C}^{n+1}$ to $\mathbb{C}^n$. Here, ``\textit{generic linear projection}'' means that $Ker(\pi):= \pi^{-1}(0)$ is a generic line in $\mathbb{C}^{n+1}$ such that $Ker(\pi) \cap X = \{0\}$.}
 
  \textit{For a generic $x$ close enough to $0$, $\pi^{-1}(x)$ is a subspace parallel to $Ker(\pi)$ which intersects $X$ in a finite number of points; this number is precisely $m(X,0)$ (see for instance} \rm{\cite[\textit{Sec.} D.3]{[7]}}\textit{). In other words, the multiplicity can be seen as the local intersection number at $0$ of $X$ with a generic line in $\mathbb{C}^{n+1}$. We note that this local intersection number is independent of the choice of the generic line (see} \rm{\cite[\textit{Sec.} D.3]{[7]}}\textit{).}

\textit{Let $f:(\mathbb{C}^n,0)\rightarrow(\mathbb{C}^{n+1},0)$ be a  finite map germ and of degree one onto its image $(\textbf{X},0):=(f(\mathbb{C}^n),0).$ Let $\pi:(\mathbb{C}^{n+1},0)\rightarrow(\mathbb{C}^n,0), \pi(X_1,\dots,X_{n+1})=(l_1(X_1,\dots,X_{n+1}),\dots,l_n(X_1,\dots,X_{n+1})),$ be a generic linear projection. Since the degree of $f$ over its image is 1, it follows that $$m(\textbf{X},0)=deg(\pi|_\textbf{X})=deg(\pi|_\textbf{X})\cdot deg(f)=deg(\pi\circ f).$$ Furthermore, we have that $$m(\textbf{X},0)=deg(\pi \circ f)= dim_{\mathbb{C}}\dfrac{\mathcal{O}_n}{\langle l_1 \circ f,\dots, l_n \circ f\rangle}.$$ In addition, assuming that $n=2$ and $f$ is a finite corank $1$ map germ and write $f$ in the normal form} 

\begin{center}
$f(x,y)=(x,\alpha y^{m_1}+xp(x,y), \beta y^{m_2}+xq(x,y))$. 
\end{center}

\noindent \textit{Suppose that $m_1<m_2$ and $\alpha \neq 0$. It follows that}

\begin{center}
$m(\textbf{X},0)=deg(\pi \circ f)= dim_{\mathbb{C}}\dfrac{\mathcal{O}_2}{\langle l_1 \circ f, l_2 \circ f\rangle}=m_1$.
\end{center}

\end{remark}

Now we consider only the $n=2$ case. Suppose now that $(\textbf{X},0)$ is a germ of surface in $(\mathbb{C}^3,0)$ given as the zero set of a quasihomogeneous map germ $F:(\mathbb{C}^3,0)\rightarrow (\mathbb{C},0)$. Suppose that the singular set of $(\textbf{X},0)$ has dimension one and also that $(\textbf{X},0)$ has a smooth normalization $f:(\mathbb{C}^2,0) \rightarrow (\mathbb{C}^3,0)$. Since $F$ is quasihomogeneous, we have that $f$ is quasihomogeneous. Suppose that is a quasihomogeneous map germ of type $(d_1,d_2,d_3;w_1,w_2)$ in a suitable system of coordinates for some positive integers $d_1,d_2,d_3,w_1$ and $w_2$. Note that $f$ is in particular a finite map germ of degree one onto its image, hence we obtain from \cite[Prop. 1.15]{form} that $F$ is of type $(d; d_1,d_2,d_3)$ where the weighted degree $d$ of $F$ is equal to $(d_1d_2d_3)/(w_1w_2)$. At this point, let's return to Question $1$ in Introduction, i.e., in this setting is the multiplicity of $(\textbf{X},0)$ determined by the weights and degrees of $f$?

Example \ref{exquasihomog} shows that in general the answer is negative. However, if we assume that $f$ is finitely determined, then Question $1$ has a positive answer, as we will see in the following lemma.

\begin{lemma}\label{lema aux5} Let $f=(f_1,f_2,f_3)$ be a singular quasihomogeneous map germ from $(\mathbb{C}^2,0)$ to $(\mathbb{C}^3,0)$ of type $(d_1,d_2,d_3; w_1,w_2)$. Without loss of generality, suppose that $1\leq d_1 \leq d_2 \leq d_3$, where the weighted degree of $f_i$ is $d_i$. Suppose that $f$ is finitely determined, then the multiplicity of the image of $f$ is determined by the weights and degrees of $f$. More precisely:

\begin{center}
$m(f(\mathbb{C}^2),0)=\dfrac{d_1 d_2}{w_1w_2}$.
\end{center}

\end{lemma}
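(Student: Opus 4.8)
The plan is to reduce the computation of the multiplicity to a degree-of-projection calculation, exactly as set up in Remark \ref{remarkMulti}, and then to exploit the corank structure of a finitely determined quasihomogeneous germ via Lemma \ref{lemma corank 1}. First I would dispose of the crosscap case, where the statement can be checked directly: the crosscap $f(x,y)=(x,y^2,xy)$ is of type $(1,2,3;1,1)$ and has image of multiplicity $2$, so the formula $d_1d_2/(ab)=1\cdot 2/(1\cdot 1)=2$ holds; this is the degenerate exception that part (a) isolates. For the generic case, part (a) asserts that $V(f_1)$ and $V(f_2)$ meet only at the origin, which is precisely what lets us treat $(f_1,f_2)$ as a generic-enough projection and compute its degree as an intersection multiplicity.

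For part (b), the key identity is $m(f(\mathbb{C}^2),0)=\deg(\pi_{|\mathbf{X}}\circ f)=\dim_{\mathbb{C}}\mathcal{O}_2/(l_1\circ f,\,l_2\circ f)$ from Remark \ref{remarkMulti}(a). The strategy is to argue that for a \emph{generic} linear projection $\pi=(l_1,l_2)$, the local algebra $\mathcal{O}_2/(l_1\circ f, l_2\circ f)$ has the same dimension as a conveniently chosen one, and then to compute that dimension. Because $f$ is quasihomogeneous, the two entries $l_1\circ f$ and $l_2\circ f$ are quasihomogeneous functions on $(\mathbb{C}^2,0)$; for generic linear forms their weighted degrees will be the two smallest available degrees among $d_1,d_2,d_3$, namely $d_1$ and $d_2$ (using the ordering $d_1\le d_2\le d_3$). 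I would then invoke the classical Bezout-type formula for the intersection multiplicity of two quasihomogeneous plane curve germs with no common component: if $g$ and $h$ are quasihomogeneous of weighted degrees $e_1,e_2$ with respect to weights $(a,b)$ and $V(g)\cap V(h)=\{0\}$, then $\dim_{\mathbb{C}}\mathcal{O}_2/(g,h)=e_1e_2/(ab)$. Applying this with $e_1=d_1$, $e_2=d_2$ yields exactly $d_1d_2/(ab)$.

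The main obstacle — and the role played by part (a) and by the finite-determinacy hypothesis — is justifying that generically $V(l_1\circ f)$ and $V(l_2\circ f)$ share no common branch, so that the intersection number is finite and the Bezout formula applies. This is where I would lean on Lemma \ref{lemma corank 1} to put $f$ in the normal form $f(x,y)=(x,\,y^n+xp,\,\beta y^m+xq)$, reducing the corank to $1$ (the corank $2$ situation would need separate handling, but the normal form reduces the generic projection to forms whose leading behaviour in $y$ is controlled by $y^n$ and $y^m$). A generic linear combination of the three coordinate functions has weighted degree equal to the minimum degree of its constituent terms, and genericity of the coefficients ensures the resulting quasihomogeneous germs are not proportional and hence have no common factor; Lemma \ref{lemmaaux3} and the finiteness of the ramification ideal guarantee that $y$ is not a common factor, which is exactly the obstruction that fails in the crosscap exception. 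Once $V(f_1)\cap V(f_2)=\{(0,0)\}$ is established in part (a), the intersection-multiplicity computation in part (b) becomes the routine weighted-Bezout count, and combining the two gives $m(f(\mathbb{C}^2),0)=d_1d_2/(ab)$.
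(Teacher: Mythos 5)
Your strategy for part (b) is the paper's: reduce to $\dim_{\mathbb{C}}\mathcal{O}_2/(l_1\circ f,\,l_2\circ f)$ for a generic projection via Remark \ref{remarkMulti}, observe that after column reduction the two generators are semi-quasihomogeneous with initial parts of weighted degrees $d_1$ and $d_2$, and apply a weighted B\'ezout count. But the entire difficulty of the lemma is concentrated in part (a), and your proposal does not actually prove it. The B\'ezout formula you invoke needs the \emph{initial forms} $f_1$ and $f_2$ to have no common branch; your justification is that ``genericity of the coefficients ensures the resulting quasihomogeneous germs are not proportional and hence have no common factor,'' which is false as stated (e.g.\ $xy$ and $x(x-y)$ are non-proportional quasihomogeneous germs with the common factor $x$), and genericity of $(l_1,l_2)$ gives you no control over $f_1$ and $f_2$ themselves, which are fixed. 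The paper's proof of (a) is a genuine case analysis: a common branch $C\subset V(f_1)\cap V(f_2)$ must be one of $V(x)$, $V(y)$, or $V(x^b-\gamma y^a)$; in each case $f(C)$ lies on the $Z$-axis and $f|_C$ has a computable generic degree ($d_3$, $d_3/b$, etc.), which finite determinacy forces to be at most $2$ since $C\subset D(f)$; this pins the weights and degrees down to a short list (Tables \ref{tabela6} and \ref{tabela7}), each entry of which is then shown via the formulas of Theorem \ref{mondformulas} (computing $C(f)$ and the $\mathcal{A}_e$-codimension) to be either the crosscap or not finitely determined. None of this appears in your sketch; pointing to Lemma \ref{lemmaaux3} only rules out $y$ as a common factor of the ramification-ideal generators in the corank~$1$ normal form, which is a different pair of functions and only one of the three types of potential common branch.

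A second, smaller problem: you propose to reduce to corank $1$ via Lemma \ref{lemma corank 1} and defer the corank $2$ case, but the lemma is stated (and used later, e.g.\ for the double-fold germs $(x^2,y^2,h(x,y))$ and in Corollary \ref{corZariskiparafamilias} explicitly without a corank hypothesis) for arbitrary quasihomogeneous finitely determined germs. The paper's argument deliberately avoids any normal form and works with the factorization $f_i=x^{e_i}y^{l_i}\prod_j(x^b-\gamma_{i,j}y^a)$ valid for any quasihomogeneous $f_i$, precisely so that corank $2$ is covered. Finally, a minor slip: the crosscap $(x,y^2,xy)$ is of type $(1,2,2;1,1)$, not $(1,2,3;1,1)$, though this does not affect the value $d_1d_2/(ab)=2$.
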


\begin{proof} Note that the Lemma holds for the crosscap $g(x,y)=(x,y^2,xy).$ Suppose that $f$ is not $\mathcal{A}$-equivalent to the crosscap. Since $f_i$ is quasihomogeneous, for $i=1,2$ and $3$ we can write  
\begin{equation}\label{eq15}
f_i(x,y)= \displaystyle { x^{e_i}y^{l_i}\prod_{j=1}^{k_i}}(x^{w_2}- \gamma_{i,j} y^{w_1})
\end{equation}

\noindent where $\gamma_{i,j} \neq 0$ for all $i$ and $j$ and $k_i,e_i,l_i\geq 0$. \\

\noindent\textbf{Statement 1:} $V(f_1)\cap V(f_2)=\left\{(0,0)\right\}.$\\

\noindent\textit{Proof of the Statement 1.} Suppose that $V(f_1) \cap V(f_2) \neq \lbrace (0,0) \rbrace$. Thus, this intersection contains (as a subset) either $V(x)$ or $V(y)$ or even $V(x^{w_2}- \gamma_{1,j} y^{w_1})$ for some $j$. In particular, since $f$ is finite, we have that $V(f_1)\cap V(f_2)\cap V(f_3)=\left\{(0,0)\right\}.$ Therefore, the defining equation of a curve in the intersection $V(f_1)\cap V(f_2)$ is not a factor of $f_3.$ Let us divide the proof of Statement 1 in cases.\\

\noindent \textbf{Case A:} Suppose that $V(x^{w_2}- \gamma_{1,j} y^{w_1}) \subset V(f_1) \cap V(f_2)$ for some $j$.\\

Consider a parametrization $\varphi_j:(\mathbb{C},0)\rightarrow (\mathbb{C}^2,0)$ for $V(x^{w_2}- \gamma_{1,j} y^{w_1})$ given by $\varphi_j(u)=(u^{w_1},\theta_j u^{w_2})$, where $\theta_j$ is a non-zero constant. Note that the composition $f \circ \varphi_j$ is finite and generically $d_3$-to-$1$. Since $f$ is finitely determined it follows that $d_3 \in \lbrace 1,2 \rbrace$.  
Since $d_1 \leq d_2 \leq d_3$ it follows that $d_i \in \lbrace 1,2 \rbrace$ for all $i$. For each choice of degrees the only possibilities for the weights are either $(w_1,w_2)=(1,1)$ or $(w_1,w_2)=(1,2)$ (and similarly, $(w_1,w_2)=(2,1)$). Table \ref{tabela6} shows all possibilities for the weights and degrees of $f$.

\begin{table}[H]
\centering
{\def\arraystretch{2.1}\tabcolsep=30pt 

\begin{tabular}{ c | c | c | c }

\hline
\rowcolor{lightgray}
Subcase  &  $d_1$ & $d_2$ & $d_3$  \\

\hline
a.1)     & $1$ & $1$  & $1$    \\
\hline
a.2)     & $1$ & $1$  & $2$    \\
\hline
a.3)     & $1$ & $2$  & $2$    \\

\hline
a.4)     & $2$ & $2$  & $2$    \\

\hline
\end{tabular}
}
\caption{Possibilities for the weights and degrees of $f$}\label{tabela6}
\end{table}

Clearly the subcases a.1) and a.2) are not singular. One way to see this is to use Theorem \ref{mondformulas} to calculate the expected number of crosscaps of $f$. In these subcases we obtain that $C(f)$ should be equal to $0$. For the subcase a.3) we have that $f$ is either of type $(1,2,2;1,1)$ or of type $(1,2,2;1,2)$ (or even $(1,2,2;2,1)$). If $f$ is of type $(1,2,2;1,1)$ then $\mu(D(f),0)=\mu(D^2(f),0)=0$ and hence it is stable and therefore $\mathcal{A}$-equivalent to the crosscap $g(x,y)=(x,y^2,xy)$. For the other types, using the same argument used in subcases a.1) and a.2) we conclude that $f$ is not singular. For the subcase a.4) note that $f$ is of either of type $(2,2,2;1,1)$ or of type $(2,2,2;1,2)$ (or similarly $(2,2,2;2,1)$). Using Theorem \ref{mondformulas} again we obtain that $f$ is not finitely determined since the $\mathcal{A}_e$-codim$(f)$ is not an integer number. Therefore, Case A cannot occur.\\ 

\noindent \textbf{Case B:} Suppose that $V(x) \subset V(f_1) \cap V(f_2)$.\\

Consider the parametrization $\varphi:(\mathbb{C},0)\rightarrow (\mathbb{C}^2,0)$ for $V(x)$ given by $\varphi(u)=(0,u)$. Note that the composition $f \circ \varphi$ is finite and generically $d_3/w_2$-to-$1$. Since $f$ is finitely determined it follows that either $d_3/w_2=1$ or $d_3/w_2=2$. If $d_3/w_2=1$, then $f$ has corank 1. Since $d_1\le d_2\le d_3,$ then $f=(x^{e_1},x^{e_2},y),$ where $e_i\ge1.$ In this case, $C(f)$ is not finite, which contradicts the hypothesis. Suppose now that $d_3/w_2=2$. In this case, one can verify that $w_1=1$, $f_1=a_1x^{e_1}+b_1x^{e_1-w_2}y,\ f_2=a_2x^{e_2}+b_1x^{e_2-w_2}y,$ and $f_3=y^2+b_3x^{w_2}y,$ for some $a_1,a_2,b_1,b_2,b_3\in\mathbb{C}$ and $e_1,e_2\ge w_2.$ Hence, considering the possible forms to $f_i's$, we conclude that $f$ is $\mathcal{A}$-equivalent to the crosscap or $C(f)$ is not finite. Therefore, Case B cannot occur.\\

\noindent \textbf{Case C:} Suppose that $V(y) \subset V(f_1) \cap V(f_2)$.\\

The proof of this case is analogous to the proof of Case 2. Therefore, Case C cannot occur, and we conclude the proof of Statement 1.\\

Now, let $\pi:(\mathbb{C}^3,0)\rightarrow (\mathbb{C}^2,0)$, $\pi(X,Y,Z)=(\alpha_1 X+ \alpha_2 Y+ \alpha_3 Z,\beta_1 X + \beta_2 Y + \beta_3 Z)$, be a generic projection in the sense of Remark \ref{remarkMulti}. Thus 

\begin{eqnarray*}
    m(f(\mathbb{C}^2),0) &=& deg(\pi_{|_{f(\mathbb{C}^2)}} \circ f)\\
    &=& dim_{\mathbb{C}}\dfrac{\mathcal{O}_2}{\langle \ \alpha_1 f_1+ \alpha_2 f_2+ \alpha_3 f_3, \ \beta_1 f_1 + \beta_2 f_2 + \beta_3 f_3 \ \rangle}\\
    &=& dim_{\mathbb{C}}\dfrac{\mathcal{O}_2}{\langle \ \alpha_1 f_1+ \alpha_2 f_2+ \alpha_3 f_3, \  \tilde{\beta}_2 f_2 + \tilde{\beta}_3 f_3 \ \rangle}
\end{eqnarray*}

\noindent where $\tilde{\beta}_2=\beta_2-(\beta_1 \alpha_2 /\alpha_1)$ and $\tilde{\beta}_3=\beta_3-(\beta_3 \alpha_3 /\alpha_1)$. Note that $h_1 := \alpha_1 f_1+ \alpha_2 f_2+ \alpha_3 f_3$ and $h_2: = \tilde{\beta}_2 f_2 + \tilde{\beta}_3 f_3$ are semi-quasihomogeneous (where $\alpha_1 f_1$ (respectively, $\tilde{\beta}_2 f_2$) is the filtration-$0$ part of $h_1$ if $d_1<d_2$ (respectively, $h_2$ if $d_2<d_3$)). By the Statement 1, we have that $V(f_1) \cap V(f_2) = \lbrace (0,0) \rbrace$ then we can apply an appropriate version of the generalized Bézout's theorem (see, for instance, \cite{Chirka}) to obtain that 

\begin{center}
$m(f(\mathbb{C}^2),0) \ =  dim_{\mathbb{C}}\dfrac{\mathcal{O}_2}{\langle \ \alpha_1 f_1+ \alpha_2 f_2+ \alpha_3 f_3, \  \tilde{\beta}_2 f_2 + \tilde{\beta}_3 f_3 \ \rangle} = \dfrac{d_1d_2}{w_1w_2}$
\end{center}

\noindent which shows in particular that the multiplicity of the image of $f$ is determined by the weights and degrees of $f$.\end{proof}\\

The following example shows that Lemma \ref{lema aux5} does not work if we remove the hypothesis that $f$ is finitely determined.

\begin{example}\label{exquasihomog} Consider the map germ $f(x,y)=(x,y^3,xy)$, it is a homogeneous map germ of type $(1,3,2;1,1)$. Since the codimension of the ideal $\langle x,x^3+y^3,xy \rangle $ is finite we obtain that $f$ is finite. Recall that $f_{\ast}\mathcal{O}_2$ denotes $\mathcal{O}_2$ considered as an $\mathcal{O}_3$-module, via composition with $f$, and $Fitt_k(f_{\ast}\mathcal{O}_2)$ is its $k$th-Fitting ideal. In this sense, a defining equation for the image of $f$ is given by the $0$th-Fitting ideal of $f_{\ast}\mathcal{O}_2$ (see \rm{\cite{[12]}}\textit{). A presentation matrix of $f_{\ast}\mathcal{O}_2$ is given by}

\begin{center}
$  \begin{bmatrix}

Z &    -X &      0 \\
0 & Z &       -X \\
-XY &    0 & Z

\end{bmatrix}.$
\end{center}

\textit{Hence, $F(X,Y,Z)=Z^3-X^3Y$ is a defining equation of $(\textbf{X},0)$. Note that $F$ is quasihomogeneous of type $(6;1,3,2)$ and the ideal generated by $F$ in $\mathcal{O}_3$ is radical. Therefore, by} \rm{\cite[\textit{Prop.} 3.1]{[12]}} \textit{we obtain that $f$ is of degree one onto its image. On the other hand, consider the map germ $g(x,y)=(x,y^3,xy+y^2)$, it is a homogeneous map germ of type $(1,3,2;1,1)$ (the same type as $f$). Denote the image of $g$ by $(\textbf{Y},0)$. Clearly $g$ is finite. A presentation matrix of $g_{\ast}(\mathcal{O}_2)$ is given by}

\begin{center}
$  \begin{bmatrix}

Z &    -X &      -1 \\
-Y & Z &       -X \\
0 &    -Y-XZ & Z+X^2

\end{bmatrix}.$
\end{center}

\textit{Therefore, $G(X,Y,Z)=Z^3-X^3Y-Y^2-3XYZ$ is a defining equation of $(\textbf{Y},0)$. Again, it follows by} \rm{\cite[\textit{Prop.} 3.1]{[12]}} \textit{that $g$ is of degree one onto its image. We have that $g$ is quasihomogeneous of same type as $f$. However, the multiplicity of $(\textbf{X},0)$ is $3$, on the other hand the multiplicity of $(\textbf{Y},0)$ is $2$. This shows that Question $1$ has a negative answer (note that $g$ is finitely determined while $f$ is not).}
\end{example}

\begin{example}
    Consider the map germs $f,g:(\mathbb{C}^2,0)\rightarrow(\mathbb{C}^3,0)$ given by $f(x,y)=(x^2,y^3,(x+y)^5)$ and $g(x,y)=(x^2,y^3,(x+y)^7).$ We have that $f$ and $g$ are finitely determined (see \rm{\cite[\textit{Ex.} 16]{Guille}}\textit{). Furthermore, $$m(f(\mathbb{C}^2),0)=6=m(g(\mathbb{C}^2),0).$$ However, note that the degree $d_3$ of $f$ is $5,$ while the degree $d_3$ of $g$ is $7.$ This example shows that the multiplicity of the image of a finitely determined, quasihomogeneous map germ does not depend of the degree $d_3$ (provided that the degrees are ordered and $d_3$ is the largest of them).}
\end{example}

The reader may wonder about the possibility of extending the Lemma \ref{lema aux5} for finitely determined, quasihomogeneous $f:(\mathbb{C}^n,0)\rightarrow(\mathbb{C}^{n+1},0)$ map germs (as in Question 2 in Introduction). We will address this problem in Section \ref{sec4} with an additional assumption (see Theorem \ref{lemamult}).

\section{The Zariski's multiplicity conjecture for surfaces}\label{secZariski}

$ \ \ \ \ $ This section is devoted to proving our first main result, Theorem \ref{mainresult} (first stated in the Introduction). As a convenience to the reader, we now repeat Theorem \ref{mainresult}. The following statement is verbatim from the Introduction.

\begin{theorem}\label{Main1}
    Let $(\textbf{X},0)$ and $(\textbf{Y},0)$ be germs of irreducible surfaces in $(\mathbb{C}^3,0)$ defined as the zero set of reduced quasihomogeneous map germs $F,G:(\mathbb{C}^3,0)\rightarrow (\mathbb{C},0)$, respectively. Suppose that $(\textbf{X},0)$ (respectively $(\textbf{Y},0)$) has a smooth normalization and outside the origin the only singularities of $(\textbf{X},0)$ (respectively $(\textbf{Y},0)$) are transverse double points. If $(\textbf{X},0)$ and $(\textbf{Y},0)$ are topologically equivalent, then $m(\textbf{X},0)=m(\textbf{Y},0)$.
\end{theorem} 
 
\begin{proof} Let $f:(\mathbb{C}^2,0)\rightarrow (\textbf{X},0) \subset (\mathbb{C}^3,0)$ (respectively $g:(\mathbb{C}^2,0)\rightarrow (\textbf{Y},0) \subset (\mathbb{C}^3,0)$) be a smooth normalization of $(\textbf{X},0)$ (respectively $(\textbf{Y},0)$). Since the only singularities of $(\textbf{X},0)$ (respectively $(\textbf{Y},0)$) outside the origin are transverse double points, by geometric criterion of Mather-Gaffney \cite[Th. \rm 2.1]{Wall} we obtain that $f$ and $g$ are finitely determined map germs. Since $F$ and $G$ are quasihomogeneous, $f$ and $g$ can be taken to be quasihomogeneous map germs. Hence, we are in the context of the results of the previous section. Suppose that $f$ (respectively $g$) is of type $(d_1,d_2,d_3; w_1,w_2)$ (respectively $(\tilde{d}_1,\tilde{d}_2,\tilde{d}_3; \tilde{w}_1, \tilde{w}_2))$.\\ 

\noindent \textbf{Statement 1:} The weights of $f$ and $g$ are the same.\\

\noindent\textit{Proof of the Statement 1.} By hypothesis, $(\textbf{X},0)$ and $(\textbf{Y},0)$ are topologically equivalent. Hence, there is a germ of homeomorphism $h:(\mathbb{C}^3,0)\rightarrow (\mathbb{C}^3,0)$ such that $h(\textbf{X},0)=(\textbf{Y},0)$. By \cite[Lemma 5]{lev}, we can extend $h$ at the normalization, i.e., there exists a germ of homeomorphism $\tilde{h}:(\mathbb{C}^2,0)\rightarrow (\mathbb{C}^2,0)$ such that the following diagram

\begin{figure}[H]
        \begin{center}
        \begin{tikzpicture}
            \draw (0,0) node {$(\mathbb{C}^2,0)$};
            \draw (-1.4,-2.5) node {$(\textbf{X},0)\subset$};
            \draw (-1.7,0) node {$(D(f),0)\subset$};
            \draw (4.9,-2.5) node {$\supset(\textbf{Y},0)$};
            \draw (5.2,0) node {$\supset(D(g),0)$};
            \draw[->] (1,0) to (2.5,0);
            \draw (3.5,0) node {$(\mathbb{C}^2,0)$};
            \draw[->] (3.5,-0.5) to (3.5,-2);
            \draw (3.5,-2.5) node {$(\mathbb{C}^3,0)$};
            \draw[->] (1,-2.5) to (2.5,-2.5);
            \draw (0,-2.5) node {$(\mathbb{C}^3,0)$};
            \draw[->] (0,-0.5) to (0,-2);
            \draw (1.75,0.5) node {$\widetilde{h}$};
            \draw (1.75,-2) node {$h$};
            \draw (-0.5,-1.25) node {$f$};
            \draw (4,-1.25) node {$g$};
        \end{tikzpicture}
        \end{center}
        \end{figure}
 
\noindent commutes. Furthermore, $\tilde{h}(D(f))=D(g)$, where $D(f)$ and $D(g)$ are (reduced) curves by Theorem \ref{criterio}. Hence, we have that $D(f)$ and $D(g)$ are topologically equivalent. Write $D(f)=V(\lambda(x,y))$ and $D(g)=V(\tilde{\lambda}(x,y))$ where $\lambda,\tilde{\lambda}:(\mathbb{C}^2,0)\rightarrow (\mathbb{C},0)$. Since $f$ and $g$ are quasihomogeneous, we have that $\lambda$ and $\tilde{\lambda}$ are both quasihomogeneous. Hence, we can write

\begin{equation}\label{eq8}
\lambda(x,y)=\displaystyle { x^{s}y^{l}\prod_{i=1}^{k}}(y^{w_1}-\alpha_i x^{w_2}) \ \ \ \ \ \ \  \tilde{\lambda}(x,y)=\displaystyle { x^{\tilde{s}}y^{\tilde{l}}\prod_{i=1}^{\tilde{k}}}(y^{\tilde{w}_1}-\tilde{\alpha_i} x^{\tilde{w}_2}).
\end{equation}

\noindent where $s,l,\tilde{s},\tilde{l} \in \lbrace 0,1 \rbrace$ and $\alpha_i, \tilde{\alpha_i} \neq 0$ for all $i$, we have also that $k,\tilde{k}\geq 0$. Suppose firstly that $D(f)$ is smooth. It follows that $D^2(f)$ is smooth. Since the Milnor numbers of $D(f)$ and $D^2(f)$ are both equal to $0$, we obtain by \cite[Th. 4.3]{[11]} (see also \cite[Th. 3.4]{MararMond} for corank $1$ analogue) that $T(f)=0$. Hence $f$ has multiplicity $2$ and \cite{Navarro} we obtain that $m(\textbf{X},0)=m(\textbf{Y},0)=2$. Therefore we can assume that $D(f)$ and $D(g)$ are singular curves throughout the rest of the proof. Now, suppose that $k=\tilde{k}=0$. Hence, $D(f)=V(xy)$, i.e., a singularity of Morse type. Note that $D^2(f)$ is a pair of distinct lines in $\mathbb{C}^2 \times \mathbb{C}^2$, thus the Milnor number of $D^2(f)$ is $1$. Again, by \cite[Th. 4.3]{[11]} we obtain that $f$ has multiplicity $2$. Therefore, we can also assume that $k,\tilde{k}\geq 1$ throughout the rest of the proof. Since $\lambda, \tilde{\lambda}$ are both quasihomogeneous having an isolated singularity at the origin and $D(f)$ is topologically equivalent to $D(g)$, by \cite{Yoshinaga} (see also \cite{Nishimura}) we obtain that the weights of $\lambda$ and $\tilde{\lambda}$ are the same. In particular, this implies that the weights of $f$ and $g$ are the same, namely, $w(x)=w_1$ and $w(y)=w_2$, which proves Statement 1.\\

Assuming the Statement 1, we can suppose that $g$ is of type $(\tilde{d}_1,\tilde{d}_2, \tilde{d}_3; w_1,w_2)$ with $ 1 \leq \tilde{d}_1 \leq \tilde{d}_2 \leq \tilde{d}_3$ (and also that $ 1 \leq d_1 \leq d_2 \leq d_3$ for the degrees of $f$). Now in order to prove that the multiplicities of $(\textbf{X},0)$ and $(\textbf{Y},0)$ are equal, by Lemma \ref{lema aux5} it is sufficient to show that all degrees of $f$ and $g$ are the same, i.e., $d_i = \tilde{d}_i$ for $i=1,2$ and $3$.\\

\noindent \textbf{Statement 2:} The degrees of $f$ and $g$ are the same.\\

\noindent\textit{Proof of Statement 2.} Denote by $d$ (respectively, $\tilde{d}$) the weighted degree of $\lambda(x,y)$ (respectively, $\tilde{\lambda}(x,y)$). Let's first show that $d=\tilde{d}$. From (\ref{eq8}), we obtain that $d=kw_1w_2+sw_1+lw_2$ and $\tilde{d}=\tilde{k}w_1w_2+\tilde{s}w_1+\tilde{l}w_2$, recall that $s,l,\tilde{s},\tilde{l} \in \lbrace 0,1 \rbrace$. Recall that  since we assume $k\geq 1$, for $i=1,\cdots,k$, we set $D(f)^i=V(\alpha_i x^{w_2}- \beta_i y^{w_1})$ the components of $D(f)$. Table \ref{tabela4} shows the values for the intersection multiplicities between (eventual) branches of $D(f)$ in terms of the weights of $\lambda$.

\begin{table}[H]
\centering
{\def\arraystretch{2.3}\tabcolsep=30pt 

\begin{tabular}{ c | c | c | c }

\hline
\rowcolor{lightgray}
Intersection multiplicity  &  $D(f)^i $ & $V(y^l)$ & $V(x^s)$ \\

\hline
$D(f)^j$     & $w_1w_2$, $ \ $ if $i\neq j$ & $lw_2$  & $sw_1$    \\
\hline
$V(x^s)$    & $sw_1$ & $sl$ & $-$ \\
\hline

$V(y^l)$ & $lw_2$ & $-$ & $-$\\

\hline
\end{tabular}
}
\caption{Intersection multiplicities of (eventual) branches of $D(f)$.}\label{tabela4}
\end{table}

Recall that we also suppose that $\tilde{k} \geq 1$, for $i=1,\cdots,\tilde{k}$, we set $D(g)^i=V( \tilde{\alpha}_i x^{w_2}- \tilde{\beta}_i y^{w_1})$ the components of $D(g)$. Table \ref{tabela5} shows the values for the intersection multiplicities between (eventual) branches of $D(g)$ in terms of the weights of $\tilde{\lambda}$.

\begin{table}[H]
\centering
{\def\arraystretch{2.3}\tabcolsep=30pt 

\begin{tabular}{ c | c | c | c }

\hline
\rowcolor{lightgray}
Intersection multiplicity  &  $D(g)^i $ & $V(y^{\tilde{l}})$ & $V(x^{\tilde{s}})$ \\

\hline
$D(g)^j$     & $w_1w_2$, $ \ $ if $i\neq j$ & $\tilde{l}w_2$  & $\tilde{s}w_1$    \\
\hline
$V(x^{\tilde{s}})$    & $\tilde{s}w_1$ & $\tilde{s}\tilde{l}$ & $-$ \\
\hline

$V(y^{\tilde{l}})$ & $\tilde{l}w_2$ & $-$ & $-$\\

\hline
\end{tabular}
}
\caption{Intersection multiplicities of (eventual) branches of $D(g)$.}\label{tabela5}
\end{table}

Since the intersection multiplicities are (embedded) topological invariants for plane curves, the values in Tables \ref{tabela4} and \ref{tabela5} must be the same (after possible reordering). In particular, we obtain that $s=1$ if and only if $\tilde{s}=1$, and $l=1$ if and only if $\tilde{l}=1$. Hence, we conclude that $d=\tilde{d}$ and therefore by \cite[Prop. 1.15]{form} we obtain that

\begin{equation}\label{eq13}
\dfrac{d_1d_2d_3}{w_1w_2}-d_1-d_2-d_3+w_1+w_2 \ =  \ \dfrac{\tilde{d}_1 \tilde{d}_2 \tilde{d}_3}{w_1w_2}-\tilde{d}_1-\tilde{d}_2-\tilde{d}_3+w_1+w_2.
\end{equation}

By \cite{Edson} we have the number of crosscaps and triple points are topological invariants for a pair of finitely determined map germs from $(\mathbb{C}^2,0)$ to $(\mathbb{C}^3,0)$. Thus we obtain that $C(f)=C(g)$ and also that $T(f)=T(g)$. By expression (\ref{eq13}) and the formulas for the invariants $C$ and $T$ described in Theorem \ref{mondformulas} we obtain the following system of equations:
\[
    \left\{
                \begin{array}{ll}

       \delta - \epsilon \ =  \tilde{\delta}-\tilde{\epsilon} \vspace{0.1cm}  \\    
       
       d_1d_2+d_1d_3+d_2d_3-(w_1+w_2)(d_1+d_2+d_3)= \tilde{d}_1\tilde{d}_2+\tilde{d}_1\tilde{d}_3+\tilde{d}_2\tilde{d}_3 -(w_1+w_2)(\tilde{d}_1+\tilde{d}_2+\tilde{d}_3) \vspace{0.2cm} \\

                  (\delta-\epsilon)(\delta-2\epsilon)=(\tilde{\delta}- \tilde{\epsilon})(\tilde{\delta}-2\tilde{\epsilon})\\
                 
                \end{array}
              \right.
  \]


\noindent where $\epsilon = d_{1}+d_{2}+d_{3}-w_1-w_2$, $\delta=d_{1}d_{2}d_{3}/(w_1w_2)$, $\tilde{\epsilon} = \tilde{d}_{1}+\tilde{d}_{2}+\tilde{d}_{3}-w_1-w_2$ and $\tilde{\delta}=\tilde{d}_{1}\tilde{d}_{2}\tilde{d}_{3}/(w_1w_2)$. Note that $d=\delta - \epsilon$, in particular $\delta - \epsilon \neq 0$. Hence, the above system is equivalent to the following system:

\[
    \left\{
                \begin{array}{ll}
                  
                  d_1d_2d_3=\tilde{d}_1\tilde{d}_2\tilde{d}_3 \vspace{0.2cm}\\
                  d_1d_2+d_1d_3+d_2d_3= \tilde{d}_1\tilde{d}_2+\tilde{d}_1\tilde{d}_3+\tilde{d}_2\tilde{d}_3 \vspace{0.2cm} \\
                  d_1+d_2+d_3 =  \tilde{d}_1+\tilde{d}_2+\tilde{d}_3.
                 
                \end{array}
              \right.
  \]

Clearly, with the condition that $d_1 \leq d_2 \leq d_3$ and $\tilde{d}_1 \leq \tilde{d}_2 \leq \tilde{d}_3$ the only solution for the above system is $d_i=\tilde{d}_i$ for $i=1,2$ and $3$, which completes the proof.\end{proof}\\

An interesting fact that we can find in the proof of the Theorem \ref{Main1} is that the weights and degrees of $f$ and $g$ are the same. We will record this result as a corollary below.

\begin{corollary}\label{weights}
    Let $f,g:(\mathbb{C}^2,0)\rightarrow(\mathbb{C}^3,0)$ be finitely determined quasihomogeneous map germs. If  $(f(\mathbb{C}^2),0)$ and $(g(\mathbb{C}^2),0)$ are topologically equivalent, then the weights and degrees of $f$ and $g$ are the same.
\end{corollary}

\begin{proof}
    The proof of this corollary is contained in the proof of the Theorem \ref{Main1}.
\end{proof}
	
	\section{The multiplicity of the image of a finitely determined, quasihomogeneous, corank 1 map germ}\label{sec4}
	
	$ \ \ \ \ $ In this section, we are interested in work with $f:(\mathbb{C}^n,0)\rightarrow(\mathbb{C}^{n+1},0)$ a finitely determined, quasihomogeneous, corank 1 map germs. We provide a positive answer to Question 2 in this setting. In what follows, we provide a quasihomogeneous normal form of a finitely determined, quasihomogeneous, corank 1 map germ from $(\mathbb{C}^n,0)$ to $(\mathbb{C}^{n+1},0).$

    \begin{lemma}\label{forma}
        Let $f:(\mathbb{C}^n,0)\rightarrow(\mathbb{C}^{n+1},0)$ be a finitely determined, quasihomogeneous, corank 1 map germ. Then, $f$ has a quasihomogeneous normal form given by \begin{equation}\label{Norm}
            f(x_1,\dots,x_n)=(x_1,\dots,x_{n-1},\alpha x_n^{m_1}+p(x_1,\dots,x_n),\beta x_n^{m_2}+q(x_1,\dots,x_n)),
        \end{equation} for some $\alpha,\beta\in\mathbb{C}$ not both zero, $p(x_1,\dots,x_{n-1},0)=0=q(x_1,\dots,x_{n-1},0)$ and $m_1\le m_2.$
    \end{lemma}

    \begin{proof}
        Write $f$ in a quasihomogeneous normal form $$f(x_1,\dots,x_n)=(x_1,\dots,x_{n-1},p_1(x_1,\dots,x_n),q_1(x_1,\dots,x_n)).$$ Without loss of generality, suppose that $f$ is of type $(w_1,\dots,w_{n-1},d_n,d_{n+1};w_1,\dots,w_n).$ Since $f$ is finitely determined, we have that $$p_1(x_1,\dots,x_n)=\alpha x_n^{m_1}+p(x_1,\dots,x_n)$$ and $$q_1(x_1,\dots,x_n)=\beta x_{n}^{m_2}+q(x_1,\dots,x_n),$$ for some $\alpha,\beta\in\mathbb{C}.$ We can assume that $m_1\le m_2.$ Furthermore, if $p(x_1,\dots,x_{n-1},0)\ne0$ or $q(x_1,\dots,x_{n-1},0)\ne0,$ we can apply isomorphims in the target to obtain $p(x_1,\dots,x_{n-1},0)=0=q(x_1,\dots,x_{n-1},0)$.
    \end{proof}

\begin{remark}
    In \rm{\cite[\textit{Lemma} 2.11]{normalform}}\textit{, the first author obtain the same normal form of $f$ in $n=2$ case, where in this case, $\alpha$ is always distinct to $0.$ However, when $n\ge3,$ there exists examples of finitely determined, quasihomogeneous, corank 1 map germs such that $\alpha$ may be zero in the normal form \eqref{Norm} in Lemma} \rm{\ref{forma}}\textit{. For example, in $n=3$ case, consider $f,g:(\mathbb{C}^3,0)\rightarrow(\mathbb{C}^4,0)$ given by $$f(x,y,z)=(x,y,xz,z^2+y^2z)$$ and $$g(x,y,z)=(x,y,xz^2+yz,z^3+x^6z).$$ One can verify that $f$ and $g$ are finitely determined, quasihomogeneous map germs, where $f$ is of type $(1,1,3,4;1,1,2)$ and $g$ is of type $(1,6,7,9;1,6,3).$ Note that weight of $z$ does not divide the degree of third coordinate of $f$ (respectively, $g$). These examples illustrate the fact that $\alpha$ may be zero in normal form \eqref{Norm} in Lemma} \rm{\ref{forma}} \textit{.}
\end{remark}

The next lemma provides a condition that ensures $\alpha\ne0$ in Lemma \ref{forma}.

\begin{lemma}
    Let $f:(\mathbb{C}^n,0)\rightarrow(\mathbb{C}^{n+1},0)$ be a finitely determined, quasihomogeneous, corank 1 map germ, with $n\ge 3$. Write $f$ in the quasihomogeneous normal form $$f(x_1,\dots,x_n)=(x_1,\dots,x_{n-1},\alpha x_{n}^{m_1}+p(x_1,\dots,x_n),\beta x_n^{m_2}+q(x_1,\dots,x_n)),$$ for some $\alpha,\beta\in\mathbb{C}$ not both zero, $p(x_1,\dots,x_{n-1},0)=0=q(x_1,\dots,x_{n-1},0)$ and $m_1\le m_2.$ If $m_2>n,$ then $\alpha\ne0.$
\end{lemma}

\begin{proof}
    Suppose that $\alpha=0.$ Therefore, $$f(0,\dots,0,x_n)=(0,\dots,0,\beta x_n^{m_2})$$ is generically $m_2$-to-1, which contradicts the finite determinacy of $f.$ Thus, $\alpha$ must be non zero.
\end{proof}\\

Before presenting the next result, let us recall the notion of topological triviality of a 1-parameter unfolding to map germs from $(\mathbb{C}^n,0)$ to $(\mathbb{C}^{n+1},0)$. The notion of topological triviality will be very useful for proving the main result of this section.

 \begin{definition}\label{defwe}
    Let $f:(\mathbb{C}^n,0)\rightarrow(\mathbb{C}^{n+1},0)$ be a finitely determined map germ, and let $\mathcal{F}:(\mathbb{C}^n\times\mathbb{C},0)\rightarrow(\mathbb{C}^{n+1}\times\mathbb{C},0)$ a 1-parameter unfolding of $f$ given by $\mathcal{F}(x,t)=(f_t(x),t)$ and $\mathcal{I}(x,t)=(f(x),t)$ the trivial unfolding of $f.$ We say that $\mathcal{F}$ is topologically equisingular (or topologically trivial) if there exists germs of homeomorphisms $$\mathcal{G}:(\mathbb{C}^n\times\mathbb{C},0)\rightarrow(\mathbb{C}^n\times\mathbb{C},0)\, \ \text{and}\, \ \mathcal{H}:(\mathbb{C}^{n+1}\times\mathbb{C},0)\rightarrow(\mathbb{C}^{n+1}\times\mathbb{C},0)$$ such that $\mathcal{G}$ and $\mathcal{H}$ are 1-parameter unfoldings of identity map germs from $n$-space and $(n+1)$-space, respectively, such that $\mathcal{I}=\mathcal{H}\circ \mathcal{F}\circ \mathcal{G}.$
\end{definition}

\begin{remark}
    Let $f:(\mathbb{C}^n,0)\rightarrow(\mathbb{C}^{n+1},0)$ be a finitely determined map germ. Consider $\mathcal{F}=(f_t,t)$ a 1-parameter unfolding of $f$. In general, verify that $\mathcal{F}$ is topologically trivial or not by definition is not a simple task. However, when $f$ is quasihomogeneous and $\mathcal{F}$ is of non-negative degree (i.e., for any term $\alpha$ in the deformation of $f_i$, the weighted degree of $\alpha$ is not smaller than the weighted degree of $f_i$), Damon proved that $\mathcal{F}$ is topologically trivial (see \rm{\cite[\textit{Cor.} 1]{Damon}}\textit{). This is a simple way to produce topologically trivial 1-parameter unfoldings in quasihomogeneous case.}
\end{remark}

The next lemma will be very important to prove the Theorem \ref{lemamult}, which provides an answer to Question 2 in corank 1 case.

\begin{lemma}\label{equi}
    Let $f:(\mathbb{C}^n,0)\rightarrow(\mathbb{C}^{n+1},0)$ be a finitely determined, quasihomogeneous, corank 1 map germ with $n\ge2.$ Let $\mathcal{F}=(f_t,t)$ a topologically trivial 1-parameter unfolding of $f$ such that $\mathcal{F}^{-1}(\left\{0\right\}\times T)=\left\{0\right\}\times T,$ where $T$ is the parameter space. If $\mathcal{F}$ is an unfolding of non-negative degree, then the family $\mathcal{F}(\mathbb{C}^n\times\mathbb{C})$ is equimultiple.
\end{lemma}

\begin{proof}
    The proof uses arguments analogous to those presented in \cite[Th. 5.3]{upper}.
\end{proof}  \\

Now, we will answer Question 2 in corank 1 case.

    \begin{theorem}\label{lemamult}
        Let $f=(f_1,\dots,f_n,f_{n+1}):(\mathbb{C}^n,0)\rightarrow(\mathbb{C}^{n+1},0)$ be a finitely determined, quasihomogeneous, corank 1 map germ, with $n\ge3.$ Consider that $f$ is written in the quasihomogeneous normal form $$f(x_1,\dots,x_n)=(x_1,\dots,x_{n-1},\alpha x_{n}^{m_1}+p(x_1,\dots,x_n),\beta x_n^{m_2}+q(x_1,\dots,x_n)),$$ for some $\alpha,\beta\in\mathbb{C}$ not both zero, $p(x_1,\dots,x_{n-1},0)=0=q(x_1,\dots,x_{n-1},0)$ and $m_1\le m_2.$ Suppose that $f$ is of type $(w_1,\dots,w_{n-1},d_n,d_{n+1};w_1,\dots,w_n).$ Thus\\

        \noindent(a) if $w_n$ divides $d_n,$ then $m(f(\mathbb{C}^n),0)=d_n/w_n.$\\

        \noindent(b) if $w_n$ does not divide $d_n,$ then $m(f(\mathbb{C}^n),0)=d_{n+1}/w_n.$\\

        \noindent In particular, the multiplicity of the image of $f$ is determined by the weights and degrees of $f.$
    \end{theorem}

    \begin{proof}
        (a) Suppose that $w_n$ divides $d_n.$ Now, assume that $\alpha\ne0.$ Let $\pi:(\mathbb{C}^{n+1},0)\rightarrow(\mathbb{C}^n,0),$ $$ \pi(X_1,\dots,X_{n+1})=(l_1(X_1,\dots,X_{n+1}),\dots,l_n(X_1,\dots,X_{n+1})),$$ be a generic projection in the sense of Remark \ref{remarkMulti}. For each $i=1,\dots,n,$ write $$l_i(X_1,\dots,X_{n+1})=\sum_{j=1}^{n+1}\alpha_{i,j}X_j,$$ where $\alpha_{i,j}$ are complex constants. By Remark \ref{remarkMulti} we have that $$m(f(\mathbb{C}^n),0)=\dim_{\mathbb{C}}\dfrac{\mathbb{C}\left\{x_1,\dots,x_n\right\}}{\left< \sum_{j=1}^{n+1}\alpha_{1,j}f_j(x_1,\dots,x_n),\dots,\sum_{j=1}^{n+1}\alpha_{n,j}f_j(x_1,\dots,x_n)\right>}.$$ Since $\alpha\ne0,$ we have that $V(f_1)\cap\dots\cap V(f_{n-1})\cap V(f_n)=0.$ Thus, applying an appropriate version of the generalized Bézout's theorem (see, for instance, \cite{Chirka}), we conclude that $$\dim_{\mathbb{C}}\dfrac{\mathbb{C}\left\{x_1,\dots,x_n\right\}}{\left< \sum_{j=1}^{n+1}\alpha_{1,j}f_j(x_1,\dots,x_n),\dots,\sum_{j=1}^{n+1}\alpha_{n,j}f_j(x_1,\dots,x_n)\right>}=\dfrac{d_n}{w_n}.$$ Now suppose that $\alpha=0.$ Consider the 1-parameter unfolding $\mathcal{F}=(f_t,t)$ of $f$ given by $$f_t(x_1,\dots,x_n)=(x_1,\dots,x_{n-1},tx_n^{d_n/w_n}+p(x_1,\dots,x_n),\beta x_n^{m_2}+q(x_1,\dots,x_n)).$$ Note that $\mathcal{F}$ is a deformation of non-negative degree. By \cite[Cor. 1]{Damon} $\mathcal{F}$ is topologically trivial. Furthermore, it's clear that $\mathcal{F}^{-1}(\left\{0\right\}\times T)=\left\{0\right\}\times T,$ where $T$ is the parameter space. Therefore, by Lemma \ref{equi} we conclude that $m(f_t(\mathbb{C}^n),0)$ is constant along $T.$ Since $f_t$ is quasihomogeneous for all $t\ne0$ sufficiently small, applying applying an appropriate version of the generalized Bézout's theorem \cite{Chirka}, we have $m(f_t(\mathbb{C}^n),0)=d_n/w_n,$ for all $t\ne0$ sufficiently small. Thus, $m(f(\mathbb{C}^n),0)=d_n/w_n.$\\

        \noindent(b) Suppose that $w_n$ does not divide $d_n.$ In this case, we have that $\alpha=0.$ Note that $$f(0,\dots,0,x_n)=(0,\dots,0,\beta x_n^{m_2})$$ and by finite determinacy of $f$ we obtain $m_2\le n.$ Note that we can write \begin{equation}\label{p1}
            f_n=p(x_1,\dots,x_n)=p_1(x_1,\dots,x_{n-1})x_n^{m_2-1}+\dots+p_{m_2-1}(x_1,\dots,x_{n-1})x_n,
        \end{equation} and $p_1$ must be non zero, otherwise, using divided differences, one can verify that $D^{k}(f)$ is not an ICIS of dimension $n-k+1$, for some $k\in\left\{2,\dots,m_2+1\right\},$ which is a contradiction, since $f$ is finitely determined (see \cite[Th. 2.14]{MararMond}). Now, by an isomorphism, we can write $$f=(x_1,\dots,x_{n-1},\beta x_n^{m_2}+q(x_1,\dots,x_n),f_n)$$ and use Mond-Pellikaan algorithm \cite{[12]} to obtain a presentation matrix of $f_*\mathcal{O}_n$ as an $\mathcal{O}_{n+1}$-module. Denote this matrix by $M$. Since the $\mathbb{C}$-vector space $\mathcal{O}_n/\langle x_1,\dots,x_{n-1},\beta x_n^{m_2}+q(x_1,\dots,x_n)\rangle$ has dimension $m_2,$ $M$ is a square matrix of order $m_2$. The determinant of $M$ provides an equation for the image of $f$. By Mond-Pellikaan algorithm \cite{[12]}, we can write $M=\lambda-X_{n+1} Id_{m_2},$ where $\lambda$ is a matrix of order $m_2$ described in the algorithm and $Id_{m_2}$ is the identity matrix of order $m_2$. Therefore, a monomial $X_{n+1}^{m_2}$ appears in the determinant of $M$ and we have $m(f(\mathbb{C}^n),0)\le m_2.$ On the other hand, recall from \eqref{p1} that $p_1\ne0.$ Furthermore, if $p_i\ne0,$ for some $i,$ then $p_i$ cannot be a constant polynomial, since otherwise we would obtain that $w_n$ divides $d_n.$ Thus, by the Mond-Pellikaan algorithm, one can verify that the entries of $\lambda$ are zero or polynomials non constant in indeterminates $X_1,\dots,X_{n+1}.$ Thus, every monomial of the determinant of $M$ has at least degree $m_2$. Hence $m(f(\mathbb{C}^n),0)=m_2=d_{n+1}/w_n.$
    \end{proof}\\

    In Theorem \ref{lemamult}, we provide a positive answer to Question 2 in corank 1 case. Based on Theorem \ref{lemamult}, we define the notion of \textit{good multiplicity} for a finitely determined, quasihomogeneous map germ from $(\mathbb{C}^n,0)$ to $(\mathbb{C}^{n+1},0)$.
    
    \begin{definition}\label{good}
        Let $f:(\mathbb{C}^n,0)\rightarrow(\mathbb{C}^{n+1},0)$ be a finitely determined, quasihomogeneous map germ of type $(d_1,\dots,d_{n+1};w_1,\dots,w_n).$ Consider the set $\left\{m_1,\dots,m_{n+1} \right\}$ where \begin{equation}\label{mi}
            m_i=\dfrac{d_1\dots d_{i-1}d_{i+1}\dots d_{n+1}}{w_1\dots w_n}.
        \end{equation} We say that the multiplicity of the image of $f$ is a good multiplicity if $m(f(\mathbb{C}^n),0)\in\left\{m_1,\dots,m_{n+1}\right\}\cap\mathbb{N}.$
    \end{definition}
    
    Note that if the image of $f$ has a good multiplicity, then that multiplicity can be expressed naturally in terms of the weights and degrees in the sense of \eqref{mi}. Thus, a natural formulation for the multiplicity of the image of a finitely determined, quasihomogeneous map germ in general case is given as in the question below:

    \begin{mybox}
		\textbf{Question 3:}  Let $f:(\mathbb{C}^n,0)\rightarrow(\mathbb{C}^{n+1},0)$ be a finitely determined, quasihomogeneous map germ, with $n\ge2.$ Denote by $(\textbf{X},0)=(f(\mathbb{C}^n),0).$ Is the multiplicity of $(\textbf{X},0)$ a good multiplicity?
	\end{mybox}

    If $n=2$, Lemma \ref{lema aux5}  provides a positive answer to Question 3. For $n\ge3,$ Theorem \ref{lemamult}  provides a positive answer to Question 3 in the corank 1 case. Apart from these cases, unfortunately the answer of the Question 3 may be negative. For instance, in Table \ref{exemplos}, we provide some examples of finitely determined, quasihomogeneous, corank 2 map germs from $(\mathbb{C}^3,0)$ to $(\mathbb{C}^{4},0)$ such that the multiplicity of the image is not a good multiplicity.

    \begin{table}[H]
\centering
{\def\arraystretch{2.1}\tabcolsep=12pt 

\begin{tabular}{ c | c | c }

\hline
\rowcolor{lightgray}
Map germ   & Multiplicity & Good multiplicity \linebreak possibilities \\ 

\hline
$\hat{A}_3(x,y,z)=(x,y^3+xz+x^4y,yz,z^2+y^5)$   & $5$  &  $6$ or $7$   \\
\hline
$\hat{A}_4(x,y,z)=(x,y^4+xz+x^6y,yz,z^2+y^5)$   & $6$  &  $8$ or $9$   \\
\hline
$f_{4,1}(x,y,z)=(x,yz,y^4+xz,z^2+x^2y^2+y^5)$   & $6$  &  $7$ or $8$   \\
\hline
$f_{5,2}(x,y,z)=(x,yz,y^5+xz,z^2+x^4y+y^7)$   & $7$ & $9$ or $10$ \\
\hline

$f_{7,1}(x,y,z)=(x,yz,y^7+xz,z^2+x^6y^2+y^{11})$ & $9$ & $13$ or $14$\\

\hline

$f_{11,2}(x,y,z)=(x,yz,y^{11}+xz,z^2+x^{12}y+y^{19})$ & $13$ & $21$ or $22$\\

\hline

$f_{13,1}(x,y,z)=(x,yz,y^{13}+xz,z^2+x^{14}y^2+y^{23})$ & $15$ & $25$ or $26$\\

\hline

$f_{16,1}(x,y,z)=(x,yz,y^{16}+xz,z^2+x^{18}y^2+y^{29})$ & $18$ & $31$ or $32$\\

\hline
\end{tabular}
}
\caption{Examples where the multiplicity of the image is not a good multiplicity.}\label{exemplos}
\end{table}

\begin{remark}
    (a) The map germs $\hat{A}_3$ and $\hat{A}_4$ in Table \rm{\ref{exemplos}} \textit{were exhibited by Altinta\c{s} Sharland in} \rm{\cite{Altintas}}\textit{. These map germs belong to the family of map germs $$\hat{A}_k(x,y,z)=(x,y^k+xz+x^{2k-2}y, yz, z^2+y^{2k-1}).$$ Note that $\hat{A}_k$ is of type $(1,2k,2k+1,2(2k-1);1,2,2k-1).$}\\

    \noindent\textit{(b) The map germs $f_{k,1}$ and $f_{k,2}$ in Table} \rm{\ref{exemplos}} \textit{belong to the following families of map germs: $$f_{k,1}(x,y,z)=(x,yz,y^k+xz,z^2+x^{4k'-2}y^2+y^{2k-3}),\ \ \text{if}\ \ k=3k'+1$$ and $$f_{k,2}(x,y,z)=(x,yz,y^k+xz,z^2+x^{4k'}y+y^{2k-3}),\ \ \text{if}\ \ k=3k'+2.$$ Note that both $f_{k,1}$ and $f_{k,2}$ are of type $(3,2k-1,2k,2(2k-3);3,2,2k-3).$ Furthermore, it is worth noting that the map germs $f_{k,1}$ and $f_{k,2}$ in the Table} \rm{\ref{exemplos}} \textit{support the Mond's conjecture} \rm{\cite{conjmond}}.\\

    \noindent\textit{(c) Consider the map germ $f:(\mathbb{C}^4,0)\rightarrow(\mathbb{C}^5,0)$ given by $$f(x,y,z,w)=(x,y,z^3+xw+y^4z,zw,w^2+x^8z+z^5).$$ One can verify that $f$ is finitely determined. Furthermore, $f$ is quasihomogeneous of type $(1,1,6,7,10;1,1,2,5)$ and $m(f(\mathbb{C}^4),0)=5$, which is not a good multiplicity for this map germ. Thus, $f$ provides a negative answer to the Question 3 for $n=4.$ It is worth noting that $f$ also supports Mond's conjecture} \rm{\cite{conjmond}}.
\end{remark}

    To conclude this section, we leave the following problem, based on the examples in Table \ref{exemplos}.

    \begin{mybox}
		\textbf{Problem 1:}  Let $n\ge3$ be an integer.\\
        
        \noindent(a) For each $r=2,\dots,n,$ is there a finitely determined, quasihomogeneous, corank $r$ map germ $f$ from $(\mathbb{C}^n,0)$ to $(\mathbb{C}^{n+1},0)$ such that the multiplicity of its image is not a good multiplicity?\\

        \noindent(b) If the image of $f$ does not have a good multiplicity, is it possible to determine $m(f(\mathbb{C}^n),0)$ in terms of the weights and degrees of $f$?
	\end{mybox}

	\section{The Zariski's multiplicity conjecture for $n$-varieties}\label{sec5}
	
	$ \ \ \ \ $  In this section, under some assumptions, we will use Theorem \ref{lemamult} to provide an answer to Zariski's multiplicity conjecture for $n$-varieties in $(n+1)$-space, for $3\le n \le 4.$ First, we will prove a lemma that guarantees that if a finitely determined map germ from $(\mathbb{C}^n,0)$ to $(\mathbb{C}^{n+1},0)$ is such that its double point space is smooth, then the map germ has corank 1, for all $n\ge2$.

    \begin{lemma}\label{D2}
        Let $f=(f_1,\dots,f_{n+1}):(\mathbb{C}^n,0)\rightarrow(\mathbb{C}^{n+1},0)$ be a finitely determined singular map germ, with $n\ge2$. If $D^2(f)$ is smooth, then $f$ has corank 1.
    \end{lemma}

    \begin{proof}
        Let us prove the case $n=2$. The argument for the general case follows analogously, requiring only the appropriate adjustments. Let $f=(f_1,f_2,f_3):(\mathbb{C}^2,0)\rightarrow(\mathbb{C}^3,0)$ be a finitely determined singular map germ. Let $\alpha=(\alpha_{ij})$ be a $3\times 2$ matrix with $\alpha_{ij}\in\mathcal{O}_{\mathbb{C}^2\times\mathbb{C}^2}$, described in Section \ref{2.1}, such that 
        \begin{equation}\label{eq}
            f_i(x,y)-f_i(x',y')=\alpha_{i1}(x,y,x',y')(x-x')+\alpha_{i2}(x,y,x',y')(y-y'), \ \ i=1,2,3.
        \end{equation}
         Thus $$D^2(f)=V(g_1,g_2,g_3,g_4,g_5,g_6)\subset\mathbb{C}\left\{x,y,x',y'\right\},$$ where $$g_i=f_i(x,y)-f_i(x',y'), \ \ i=1,2,3,$$ and for each $i=4,5,6,$ $g_i$ denotes a $2\times2$ minor of $\alpha.$ Now define $F:\mathbb{C}^4\rightarrow\mathbb{C}^6$ by $$F(x,y,x',y')=(g_1,g_2,g_3,g_4,g_5,g_6).$$ Consider the Jacobian matrix of $F$ given by $$Jac(F)=\left[\begin{array}{ccc}
        (g_1)_x & \cdots & (g_1)_{y'}\\
        \vdots & \ddots & \vdots \\
        (g_6)_x & \cdots & (g_6)_{y'}
        \end{array}\right].$$ 
        
        Denote by $Sing(D^2(f))$ the singular locus of $D^2(f).$ By singular locus notion (see, for instance, \cite[Sec. 5.7]{Greuel}), we have that $Sing(D^2(f))=V(J)\cap D^2(f),$ where $J$ is the ideal generated by $3\times 3$ minors of $Jac(F).$ But $D^2(f)$ is smooth. Hence, $Sing(D^2(f))=\emptyset.$ Since $$\emptyset=V(J+\langle g_1,\dots,g_6\rangle),$$ some generator of $J+\langle g_1,\dots,g_6\rangle$ is invertible in $\mathbb{C}\left\{x,y,x',y'\right\}.$ Let us show that $J$ contains an invertible generator. It's clear that $g_1,g_2$ and $g_3$ are not invertible. Suppose that $g_i$ is invertible, for some $i=4,5,6.$ Without loss of generality, suppose that $g_4$ is invertible, where $g_4=\alpha_{11}\alpha_{22}-\alpha_{12}\alpha_{21}.$ Since $g_4$ is invertible (meaning $g_4(0)\ne0$), at least one of the terms $\alpha_{11}\alpha_{22}$ or $\alpha_{12}\alpha_{21}$ must be non-zero at the origin (if both were zero, $g_4(0)$ would be zero, which is a contradiction). Again, without loss of generality, suppose that $\alpha_{11}\alpha_{22}$ is invertible. Therefore, $\alpha_{11}$ and $\alpha_{22}$ are invertible. Thus, we can write \begin{equation}\label{alfa1}
    \alpha_{11}=a+\widetilde{\alpha}_{11} \ \ \ \text{and} \ \ \ \alpha_{22}=b+\widetilde{\alpha}_{22},
\end{equation} with $a,b\ne0$ and $\widetilde{\alpha}_{11}(0)=0=\widetilde{\alpha}_{22}(0)$. Replacing \eqref{alfa1} in \eqref{eq}, we conclude that $f$ is not singular, which contradicts the assumptions. Therefore, at least one generator of $J$ is invertible. For example, if we consider that the $3\times 3$ minor of $Jac(F)$ given by $$\left|\begin{array}{ccc}
        (g_1)_x & (g_1)_y & (g_1)_{x'}\\
        (g_2)_x & (g_2)_y & (g_2)_{x'} \\
        (g_3)_x & (g_3)_y & (g_3)_{x'}
        \end{array}\right|$$ is invertible, and use the same ideas that we used in $g_4$, we conclude that $f$ is not singular. On other hand, if we consider the $3\times3$ minor of $Jac(F)$ given by $$\left|\begin{array}{ccc}
        (g_1)_x & (g_1)_y & (g_1)_{x'}\\
        (g_4)_x & (g_4)_y & (g_4)_{x'} \\
        (g_5)_x & (g_5)_y & (g_5)_{x'}
        \end{array}\right|$$ is invertible, one can verify that $f$ has corank 1. Now, analyzing the other generators of $J$, we obtain some generators that cannot be invertible and other that implies that $f$ has corank 1.
    \end{proof}\\

    The following lemma is essential to the proof of our main result in this section.

    \begin{lemma}\label{mu2}
        Let $f:(\mathbb{C}^n,0)\rightarrow(\mathbb{C}^{n+1},0)$ be a finitely determined, corank 1 map germ, with $n\ge2$. Write $f$ in normal form $$f(x_1,\dots,x_n)=(x_1,\dots,x_{n-1},p(x_1,\dots,x_n),q(x_1,\dots,x_n)).$$ If $D^2(f)$ is smooth, then $p$ and $q$ each have a monomial term of multiplicity 2. More precisely, $p$ and $q$ each have a monomial of the form $x_1x_n,\dots, x_{n-1}x_n$ or $x_n^2.$
    \end{lemma}

    \begin{proof}
        Again, let us prove the $n=2$ case. The general case is analogous, requiring only minor adjustments. Let $D^2(f)=V(\phi(x,y,z),\psi(x,y,z))$ be the double point set of $f$, where $\phi(x,y,z)$ and $\psi(x,y,z)$ are the divided differences of $p$ and $q,$ respectively, described in Section \ref{2.1}. Now, define $F:\mathbb{C}^3\rightarrow\mathbb{C}^2$ by $F(x,y,z)=(\phi,\psi).$ Consider the Jacobian matrix $Jac(F)$ of $F$ and let $J$ the ideal generated by $2\times2$ minors of Jacobian matrix of $F.$ Since $D^2(f)$ is smooth, we have that $$\emptyset=Sing(D^2(f))=V(J+\langle\phi,\psi\rangle),$$ which implies that some generator of $J+\langle\phi,\psi\rangle$ is invertible in $\mathbb{C}\left\{x,y,z\right\}.$ Thus, using analogous arguments of Lemma \ref{D2}, we can assume (without loss of generality) that $\phi_x$ and $\psi_y$ are invertible. Therefore, $\phi$ has a monomial $x$ and $\psi$ has a monomial $y.$ Hence, $p$ has a monomial $xy$ and $q$ has a monomial $y^2.$ 
    \end{proof}\\

    In the sequel, we will proof our second main result, Theorem \ref{T2}. Again, let us state Theorem \ref{T2} with the markings from this section.

    \begin{theorem}\label{TEO1}
        Let $f,g:(\mathbb{C}^n,0)\rightarrow(\mathbb{C}^{n+1},0)$ be finitely determined, quasihomogeneous, corank 1 map germs, with $3\le n \le 4$. Denote by $(\textbf{X},0)=(f(\mathbb{C}^n),0)$ and $(\textbf{Y},0)=(g(\mathbb{C}^n),0).$ Suppose that $(\textbf{X},0)$ and $(\textbf{Y},0)$ are topologically equivalent. If the multiple point spaces $D^{n-1}(f)$ and $D^{n-1}(g)$ are smooth, then $m(\textbf{X},0)=m(\textbf{Y},0).$
    \end{theorem}

    \begin{proof}
Let us first consider the case $n=3$. Since $f$ and $g$ have corank 1, we can write $f$ and $g$ in the normal forms $$f(x,y,z)=(x,y,p(x,y,z),q(x,y,z)) \ \ \ \text{and}\ \ \ g(u,v,w)=(u,v,r(u,v,w),s(u,v,w)),$$ where $p,q,r,s\in\mathfrak{m}_3^2$ and $\mathfrak{m}$ denotes the maximal ideal of $\mathcal{O}_3.$ Suppose that $f$ is of type $(w_1,w_2,d_3,d_4;w_1,w_2,w_3)$ with $d_3\le d_4$ and $g$ is of type $(\widetilde{w}_1,\widetilde{w}_2,\widetilde{d}_3,\widetilde{d}_4;\widetilde{w}_1,\widetilde{w}_2,\widetilde{w}_3)$ with $\widetilde{d}_3\le\widetilde{d}_4.$ Now consider a germ of homeomorphism $h:(\mathbb{C}^4,0)\rightarrow(\mathbb{C}^4,0)$ such that $h(\textbf{X},0)=(\textbf{Y},0).$ By \cite[Lemma 5]{lev}, there exists a germ of homeomorphism $\widetilde{h}:(\mathbb{C}^3,0)\rightarrow(\mathbb{C}^3,0)$ such that the following diagram \begin{figure}[H]
        \begin{center}
        \begin{tikzpicture}
            \draw (0,0) node {$(\mathbb{C}^3,0)$};
            \draw (-1.4,-2.5) node {$(\textbf{X},0)\subset$};
            \draw (-1.7,0) node {$(D(f),0)\subset$};
            \draw (4.9,-2.5) node {$\supset(\textbf{Y},0)$};
            \draw (5.2,0) node {$\supset(D(g),0)$};
            \draw[->] (1,0) to (2.5,0);
            \draw (3.5,0) node {$(\mathbb{C}^3,0)$};
            \draw[->] (3.5,-0.5) to (3.5,-2);
            \draw (3.5,-2.5) node {$(\mathbb{C}^4,0)$};
            \draw[->] (1,-2.5) to (2.5,-2.5);
            \draw (0,-2.5) node {$(\mathbb{C}^4,0)$};
            \draw[->] (0,-0.5) to (0,-2);
            \draw (1.75,0.5) node {$\widetilde{h}$};
            \draw (1.75,-2) node {$h$};
            \draw (-0.5,-1.25) node {$f$};
            \draw (4,-1.25) node {$g$};
        \end{tikzpicture}
        \end{center}
        \end{figure}

\noindent commutes and $\widetilde{h}(D(f),0)=(D(g),0).$ In particular, $(D(f),0)$ and $(D(g),0)$ are topologically equivalent. Now, consider the normalization maps $n_1:(D^2(f),0)\rightarrow(D(f),0)$ and $n_2:(D^2(g),0)\rightarrow(D(g),0)$ of $(D(f),0)$ and $(D(g),0),$ respectively. Since $D^2(f)$ and $D^2(g)$ are smooth, we have that $$(D^2(f),0)\simeq(\mathbb{C}^2,0)\simeq(D^2(g),0).$$ Furthermore, the only singularities that appear outside the
origin in a small neighborhood of zero $U$ of $D(f)$ (respectively, $V$ of $D(g)$) via $n_1$ (respectively, via $n_2$) are transversal double points. Thus, By Mather-Gaffney criterion \cite[Th. 2.1]{Wall}, we have that $n_1$ (respectively, $n_2$) is finitely determined. Moreover, $n_1$ and $n_2$ are quasihomogeneous, corank 1 map germs. By Theorem \ref{Main1}, we conclude that $$m(D(f),0)=m(D(g),0).$$ Furthermore, the weights and degrees of $n_1$ and $n_2$ are the same. 

We will now examine the possible forms of the components of the normalization map germs $n_1$ and $n_2$. By Lemma \ref{mu2}, we know that $p$ and $q$ in $f$ (respectively, $r$ and $s$ in $g$) each have a monomial term of multiplicity 2. Suppose that $p$ has a monomial term $z^2.$ Using Mond-Pellikaan algorithm \cite{[12]}, one can verify that $m(\textbf{X},0)=2.$ Suppose that neither $r$ nor $s$ has a monomial term $w^2$. Thus, by finite determinacy there exists a monomial term $w^{m_1}$ in $r$ or $s$, with $m_1\ge3.$ Using divided differences for $f,$ we have that the triple point space of $f$ consists only of the origin. On other hand, the triple point space of $g$ is a curve. Following the notation of \cite[Sec. 1]{[12]}, consider the complex analytic space $M_2(f)=V(Fitt_2(f_*\mathcal{O}_3))$, that is, the set
of the triple point in the target. By \cite[Lemma 3.9]{Kleiman}, we have that \begin{eqnarray*}
            (f\circ n_1)^*(M_2(f))&=&n_1^*\circ f^*(Fitt_2(f_*\mathcal{O}_3))\\
            &=&n_1^*(Fitt_1((n_1)_*\mathcal{O}_2))\\
            &=&D^2(n_1).
        \end{eqnarray*} Therefore, $(D^2(n_1),0)\simeq(D^3(f),0).$ By an analogous argument, we have that $(D^2(n_2),0)\simeq(D^3(g),0).$ Since the homeomorphism $\widetilde{h}$ preserves the singular set, we have that $\widetilde{h}^{-1}$ maps the double point set of $n_2$ (a curve) in the double point set $n_1$ (a point), which is a contradiction. Thus, either $r$ or $s$ has a monomial term $w^2.$ Again, using Mond-Pellikaan algorithm \cite{[12]}, we conclude that $m(\textbf{Y},0)=2.$ To conclude the proof in the case $n=3,$ we need to study the case where (without loss of generality) $p$ has a monomial term $xz,$ $q$ has a monomial term $yz,$ $r$ has a monomial term $uw$ and $s$ has a monomial term $vw.$ In this setting, one can verify that $n_1$ is of type $(d_3-w_3,d_4-w_3,w_3;w_3,w_3)$ and $n_2$ is of type $(\widetilde{d}_3-\widetilde{w}_3,\widetilde{d}_4-\widetilde{w}_3,\widetilde{w}_3;\widetilde{w}_3,\widetilde{w}_3).$ By Corollary \ref{weights}, we conclude that $d_3=\widetilde{d}_3, d_4=\widetilde{d}_4$ and $w_3=\widetilde{w}_3.$ Therefore, by Theorem \ref{lemamult} we conclude the proof.

Now consider the $n=4$ case. Since $f$ and $g$ have corank 1, and $D^3(f)$ and $D^3(g)$ are smooth, by \cite[Prop. 9.14]{[7]} we have that $D^2(f)$ and $D^2(g)$ are both smooth. Let $h_0:(\mathbb{C}^5,0)\rightarrow(\mathbb{C}^5,0)$ be a germ of homeomorphism such that $h_0(\textbf{X},0)=(\textbf{Y},0).$ By \cite[Lemma 5]{lev}, there exists $h_1:(\mathbb{C}^4,0)\rightarrow(\mathbb{C}^4,0)$ be a germ of homeomorphism such that the following diagram \begin{figure}[H]
        \begin{center}
        \begin{tikzpicture}
            \draw (0,0) node {$(\mathbb{C}^4,0)$};
            \draw (-1.4,-2.5) node {$(\textbf{X},0)\subset$};
            \draw (-1.7,0) node {$(D(f),0)\subset$};
            \draw (4.9,-2.5) node {$\supset(\textbf{Y},0)$};
            \draw (5.2,0) node {$\supset(D(g),0)$};
            \draw[->] (1,0) to (2.5,0);
            \draw (3.5,0) node {$(\mathbb{C}^4,0)$};
            \draw[->] (3.5,-0.5) to (3.5,-2);
            \draw (3.5,-2.5) node {$(\mathbb{C}^5,0)$};
            \draw[->] (1,-2.5) to (2.5,-2.5);
            \draw (0,-2.5) node {$(\mathbb{C}^5,0)$};
            \draw[->] (0,-0.5) to (0,-2);
            \draw (1.75,0.5) node {$h_1$};
            \draw (1.75,-2) node {$h_0$};
            \draw (-0.5,-1.25) node {$f$};
            \draw (4,-1.25) node {$g$};
        \end{tikzpicture}
        \end{center}
        \end{figure}

\noindent commutes and $h_1(D(f),0)=(D(g),0).$ In particular, $(D(f),0)$ and $(D(g),0)$ are topologically equivalent. Consider the normalization maps $f_1:(D^2(f),0)\rightarrow(D(f),0)$ and $g_1:(D^2(g),0)\rightarrow(D(g),0)$ of $(D(f),0)$ and $(D(g),0),$ respectively. As in $n=3$, $f_1$ and $g_1$ are finitely determined, quasihomogeneous, corank 1 map germs. Since $D^2(f)$ and $D^2(g)$ are both smooth, we have that $(D^2(f),0)\simeq(\mathbb{C}^3,0)\simeq(D^2(g),0).$ In what follows, consider $f_2:(D^2(f_1),0)\rightarrow(D(f_1),0)$ and $g_2:(D^2(g_1),0)\rightarrow(D(g_1),0)$ the normalization map germs of $(D(f_1),0)$ and $(D(g_1),0),$ respectively. By \cite[Lemma 3.9]{Kleiman}, we have that \begin{eqnarray*}
            (f\circ f_1\circ f_2)^*(M_2(f))&=&f_2^*\circ f_1^*\circ f^*(Fitt_2(f_*\mathcal{O}_4))\\
            &=&f_2^*\circ f_1^*(Fitt_1(f_1)_*\mathcal{O}_3)\\
            &=&f_2^*(Fitt_0((f_2)_*\mathcal{O}_{D^2(f_1)}))\\
            &=&D^2(f_1).
        \end{eqnarray*} Therefore, there is an isomorphism between $(D^2(f_1),0)$ and $(D^3(f),0).$ By hypothesis, $(D^3(f),0)$ is smooth, which implies that $(D^2(f_1),0)$ is also smooth. By an analogous argument, we have that $(D^2(g_1),0)$ is smooth. By $n=3$ case, we conclude that $m(D(f),0)=m(D(g),0)$ and the weights and degrees of $f_1$ and $g_1$ are the same. With analogous arguments, one can verify that the weights and degrees of $f$ and $g$ are the same. As before, we conclude that $m(\textbf{X},0)=m(\textbf{Y},0).$
    \end{proof}

    \begin{remark}
        (a) One might ask if Theorem \ref{TEO1} can be extended to $n>4$. However, a result by Giménez Conejero \rm{\cite[\textit{Th.} 4.8]{Conejero}}\textit{ shows that if $n>4,$ then $D^{n-1}(f)$ and $D^{n-1}(g)$ are singular or empty. This renders a natural generalization for $n>4$ meaningless, since no examples in these dimensions satisfy the required hypotheses.}\\

        \noindent\textit{(b) Unfortunately, the only examples of map germs satisfying the hypotheses of the Theorem} \rm{\ref{TEO1}} \textit{in case $n=4$ are the stable map germs. A way to see this is to use the techniques of Lemma} \rm{\ref{mu2}}\textit{. Since $D^3(f)$ is smooth, then $Sing(D^3(f))=\emptyset.$ Moreover, since $f$ has corank 1, then we can compute the generators of $D^3(f)$ by divided differences. Thus, writing $f$ in the normal form $$f(x,y,z,w)=(x,y,z,p(x,y,z,w),q(x,y,z,w)),$$ one can verify that either $p$ or $q$ has a monomial term $w^3,$ which implies that $f$ does not have quadruple points. Therefore, by} \rm{\cite[\textit{Th.} 2.14]{MararMond}}\textit{, we conclude that $f$ is stable.} 
        
        \textit{On the other hand, this does not occur for $n=3$. Consider the family of map germs $f_k:(\mathbb{C}^3,0)\rightarrow(\mathbb{C}^4,0)$ given by $$f_k(x,y,z)=(x,y,z^k+xz,z^{k+2}+yz),$$ with $k\ge3.$ Using divided differences and} \rm{\cite[\textit{Th.} 2.14]{MararMond}}\textit{, one can verify that $f_k$ is a finitely determined map germ for all $k\ge3$. Furthermore, $D^2(f_k)$ is smooth and $f_k$ is not a stable map germ for all $k\ge3.$ Using the techniques presented in} \rm{\cite{Altintas}} \textit{and SINGULAR} \rm{\cite{singular}}\textit{, program we checked that $f_k$ supports the Mond's conjecture} \rm{\cite{conjmond}} \textit{up to $k=40$.}
    \end{remark}

    Some natural questions arise at this point: First, does the result still hold if one of the spaces $D^{n-1}(f)$ or $D^{n-1}(g)$ is not smooth? Second, can the assumption of
corank 1 be changed to any corank? If $D^{n-1}(f)$ or $D^{n-1}(g)$ is singular, the answer is directly linked to Theorem \ref{Main1}. This connection, however, depends on the case of singular normalization of $(\textbf{X},0)$, which itself remains an open problem for us. 

For the corank $>1$ case, the problem is directly linked to the solution of Problem 1 in Section \ref{sec4}. A further complication is the inherent difficulty of working with the multiple point space when the corank is greater than 1. In our technique, it was essential to know the equations for the multiple point spaces explicitly in order to apply recursion arguments. We conclude this section with the following problem, based on the discussion above.

    \begin{mybox}
		\textbf{Problem 2:}  Let $f,g:(\mathbb{C}^n,0)\rightarrow(\mathbb{C}^{n+1},0)$ be finitely determined, quasihomogeneous map germs, with $n\ge 3$. Denote by $(f(\mathbb{C}^n),0)=(\textbf{X},0)$ and $(g(\mathbb{C}^n),0)=(\textbf{Y},0).$ If $(\textbf{X},0)$ and $(\textbf{Y},0)$ are topologically equivalent, is it true that $m(\textbf{X},0)=m(\textbf{Y},0)?$
	\end{mybox}

    \begin{remark}
        The authors used the software Surfer \rm{\cite{surfer}} \textit{to create the figures in the text}.
    \end{remark}

    \section*{Acknowledgments}	
$ \ \ \ \ $ This work constitutes a part of the second author's Ph.D. thesis at the Federal University of Paraíba under the supervision of Otoniel Nogueira da Silva to whom he would like to express his deepest gratitude. The first author acknowledges support by grant Universal 407454/2023-3 from Conselho Nacional de Desenvolvimento Científico e Tecnológico (CNPq). The second author acknowledges support by Coordenação de Aperfeiçoamento de Pessoal de Nível Superior (CAPES). Finally, the authors are immensely grateful to Roberto Giménez Conejero for the inspiring conversations and suggestions during the 13th Mini Workshop on Singularities, Geometry and Differential Equations.

	\small

	\begin{flushleft}
    $\bullet$ Silva, O. N.\\
		\textit{otoniel.silva@academico.ufpb.br}\\
		Universidade Federal da Paraíba, 58.051-900, João Pessoa, PB, Brazil.

        $ \ \ $\\

		$\bullet$ Silva Jr, M. M.\\
		\textit{mmsj@academico.ufpb.br}\\
		Universidade Federal da Paraíba, 58.051-900, João Pessoa, PB, Brazil.\\

	\end{flushleft}

\end{document}